\newcommand\fuc{Fu\v c\'\i k\ }
\newtheorem{thm}{Theorem}[section]
\newtheorem{prop}[thm]{Proposition}
\newtheorem{lemma}[thm]{Lemma}
\newtheorem{cor}[thm]{Corollary}
\theoremstyle{definition}
\theoremstyle{remark}
\newtheorem{remark}[thm]{Remark}
\newcounter{myacount}
\newenvironment{alist}
{  \begin{list}
   {$(\alph{myacount})$}
   {\usecounter{myacount}\setlength\labelwidth{10 cm}}  }
{\end{list}}
\def\bal{\begin{alist}} \def\eal{\end{alist}}
\newcounter{myicount}
\newenvironment{ilist}
{  \begin{list}
   {$(\roman{myicount})$}
   {\usecounter{myicount}\setlength\labelwidth{10 cm}}  }
{\end{list}}
\def\bil{\begin{ilist}} \def\eil{\end{ilist}}
\newenvironment{tlist}
{  \begin{list}
   {\large$\blacktriangleright$}
   {\usecounter{mytbcount}\setlength\labelwidth{10 cm}}  }
{\end{list}}
\def\btl{\begin{tlist}} \def\etl{\end{tlist}}
\newenvironment{blist}
{  \begin{list}
   {\large$\bullet$}
   {\usecounter{mytbcount}\setlength\labelwidth{10 cm}}
   }
{\end{list}}
\def\bbl{\begin{blist}} \def\ebl{\end{blist}}
\numberwithin{equation}{section}
\newcommand{\beq}{\begin{equation}}
\newcommand{\eeq}{\end{equation}}
\newcommand{\beqs}{\begin{equation*}}
\newcommand{\eeqs}{\end{equation*}}
\newcommand{\alns}[1]{\begin{align*} #1 \end{align*}}
\newcommand\D{\Delta}
\newcommand\al{\alpha}
\newcommand\be{\beta}
\newcommand\de{\delta}
\newcommand\De{\Delta}
\newcommand\la{\lambda}
\newcommand\Si{\Sigma}
\newcommand{\mat}[1]{\begin{pmatrix} #1 \end{pmatrix}}
\newcommand \bmat{\begin{pmatrix}}
\newcommand \emat{\end{pmatrix}}
\newcommand\tde{\widetilde \de}
\newcommand\tpsi{\widetilde \psi}
\newcommand\ttau{{\widetilde \tau}}
\newcommand\R{\mathbb{R}}
\let\le=\leqslant
\let\ge= \geqslant
\newcommand\X{\times}
\renewcommand\emptyset{\mbox{\Large \o}}
\newcommand{\diff}{\,\mathrm{d}}
\newcommand\sgn{{\rm sgn}\,}
\begin{document}

\title[Landesman-Lazer conditions]
{Landesman-Lazer conditions at half-eigenvalues
of the $p$-Laplacian}
\author{Fran\c cois Genoud, Bryan P. Rynne}
\address{Department of Mathematics and the Maxwell Institute for
Mathematical Sciences, Heriot-Watt University,
Edinburgh EH14 4AS, Scotland.}
\email{B.P.Rynne@hw.ac.uk}
\email{F.Genoud@hw.ac.uk}

\begin{abstract}
We study the existence of solutions of the Dirichlet
problem
\begin{gather}
-\phi_p(u')'
-a_+ \phi_p(u^+) + a_- \phi_p(u^-) -\la \phi_p(u)  = f(x,u) ,
\quad x \in (0,1), \label{pb.eq} \tag{1}\\
u(0)=u(1)=0,\label{pb_bc.eq} \tag{2}
\end{gather}
where  $p>1$, $\phi_p(s):=|s|^{p-1}\sgn s$ for  $s \in \R$,
the coefficients $a_\pm \in C^0[0,1]$, $\la \in \R$,
and $u^\pm := \max\{\pm u,0\}$.
We suppose that $f\in C^1([0,1]\X\R)$
and that there exists $f_\pm \in C^0[0,1]$ such that
$\lim_{\xi\to\pm\infty} f(x,\xi) = f_\pm(x)$, for all $x \in [0,1]$.
With these conditions the problem
\eqref{pb.eq}-\eqref{pb_bc.eq}
is said to have a `jumping
nonlinearity'.
We also suppose that the problem
\begin{gather}
-\phi_p(u')' =  a_+ \phi_p(u^+) - a_- \phi_p(u^-)  + \la \phi_p(u)
\quad\text{on} \ (0,1),  \tag{3}  \label{heval_pb.eq}
\end{gather}
together with \eqref{pb_bc.eq},
has a non-trivial solution $u$.
That is,  $\la$ is a `half-eigenvalue' of
\eqref{pb_bc.eq}-\eqref{heval_pb.eq},
and the problem
\eqref{pb.eq}-\eqref{pb_bc.eq}
 is said to be `resonant'.
Combining a shooting method with so called `Landesman-Lazer' conditions,
we  show that the problem \eqref{pb.eq}-\eqref{pb_bc.eq} has a
solution.

Most previous existence results for jumping nonlinearity problems at
resonance have considered the case where the coefficients $a_\pm$ are
constants,
and the resonance has been at a point in the `\fuc spectrum'.
Even in this constant coefficient case our result extends previous
results.
In particular, previous variational approaches have required strong
conditions on the location of the resonant point,
whereas our result applies to any point in the \fuc spectrum.
\end{abstract}

\maketitle

\section{Introduction}  \label{intro.sec}

We consider the $p$-Laplacian Dirichlet problem
\begin{gather}
-\phi_p(u')'
-a_+ \phi_p(u^+) + a_- \phi_p(u^-) -\la \phi_p(u)  = f(x,u) ,
\quad x \in (0,1), \label{eq.eq}\\
u(0)=u(1)=0,\label{dbc.eq}
\end{gather}
where  $p>1$, $\phi_p(s):=|s|^{p-1}\sgn s$ for  $s \in \R$,
the coefficients $a_\pm \in C^0[0,1]$, $\la \in \R$,
and $u^\pm := \max\{\pm u,0\}$.
We assume that $f\in C^1([0,1]\X\R)$ satisfies the following hypotheses:
\medskip

\bbl
\item[$(f1)$]
there exist $f_\pm \in C^0[0,1]$  such that
\begin{equation}  \label{fpm.eq}
\lim_{\xi\to\pm\infty} f(x,\xi) = f_\pm(x) , \quad x\in[0,1] ,
\end{equation}
\item[$(f2)$]
there exist $K_1>0$ and $\rho \in [0,1)$, with $\rho>2-p$ if $p \le 2$,
such that
\begin{equation}  \label{fxi_bnd.eq}
|\xi^\rho f_\xi(x,\xi)| \le K_1 , \quad  x \in [0,1], \ \xi \in \R .
\end{equation}
\ebl
Of course, it follows immediately from $(f1)$ that there exists $K_0>0$
such that
\begin{equation}  \label{fint.eq}
|f(x,\xi)| \le K_0 , \quad  x \in [0,1], \ \xi \in \R .
\end{equation}

\noindent
These conditions imply that the asymptotic behaviour of
\eqref{eq.eq} as $u \to \pm \infty$ is determined by
the coefficients $a_\pm$ and $f_\pm$
(and the value of $\la$),
and that these asymptotic behaviours may be different.
Such problems are often termed {\em jumping}.

Let us now introduce some basic notations and definitions. The spaces
$C^i[0,1]$ will be endowed with their usual sup-type norms
$|\cdot|_i, \ i=0,1$.
We define $D_p$ to be the set of functions $u \in C^1[0,1]$ such that
$\phi_p(u')\in C^1[0,1]$, and
an operator $\D_p : D(\D_p) \to C^0[0,1]$ by
\begin{align*}
D(\De_p) &:= \{ u \in D_p : \text{$u$ satisfies \eqref{dbc.eq}}
\},
\\
\De_p(u) &:= \phi_p(u')' , \quad u \in D(\De_p).
\end{align*}
In addition, we denote by $u \to f(u)$ the Nemitskii mapping associated
with $f$,
that is, $f(u)$ is defined by $f(u)(x) := f(x,u(x)), \ x \in [0,1]$.
(We will use a similar notation for other Nemitskii mappings.)
Clearly, $u \to f(u)$ is a continuous mapping from $C^0[0,1]$ to
$C^0[0,1]$.
With this notation the problem \eqref{eq.eq}-\eqref{dbc.eq} can now be
rewritten as
\begin{equation}  \label{nlslvble.eq}
-\De_p(u) - a_+ \phi_p(u^+) + a_- \phi_p(u^-) - \la \phi_p(u) = f(u) ,
\quad  u \in D(\De_p) .
\end{equation}

Related to \eqref{nlslvble.eq} is the half-eigenvalue problem
\begin{equation}  \label{heval.eq}
-\D_p(u) -  a_+ \phi_p(u^+) + a_- \phi_p(u^-)  = \la \phi_p(u),
\quad  u \in D(\De_p) .
\end{equation}
The problem \eqref{heval.eq} is positively homogeneous, in the sense
that if $u$ is a solution then $tu$ is also a
solution for all $t \ge 0$.
Any $\la$ for which \eqref{heval.eq}
has a non-trivial solution $u$ is called a {\em half-eigenvalue},
and any corresponding non-trivial solution $u$ is a
{\em half-eigenfunction.}
We define a spectrum for \eqref{heval.eq} to be the set
$$
\Si_H = \Si_H(a_\pm) :=
\{ \la \in \R : \text{\eqref{heval.eq} has a non-trivial solution}\}.
$$

To describe the structure of the set $\Si_H$ we introduce some further
notation.
For each integer $k \ge 0$ and $\nu \in \{\pm\}$, let $S_{k,\nu}$
denote the set of functions $u \in D(\D_p)$ having only
simple zeros and exactly $k$ such zeros in $(0,1)$,
and such that $\nu u'(0) >0$.
Any non-trivial solution $u$ of \eqref{heval.eq} belongs to $S_{k,\nu}$
for some $k \ge 0$ and $\nu \in \{\pm\}$
(see Remark~\ref{wellposedness.rem} below).
The following description of the spectrum $\Si_H$ was given in
\cite{RYN2}.

\begin{thm} \label{heval.thm}
For each $k \ge 0$, \eqref{heval.eq} has unique solutions
$(\la,u)=(\la_{k,\pm},u_{k,\pm}) \in \R \X S_{k,\pm}$ with
$u_{k,\pm}'(0) = \pm 1$.
All the half-eigenfunctions corresponding to $\la_{k,\pm}$
are of the form $t u_{k,\pm}$, with $t>0$,
and the spectrum $\Si_H$ is given by
$$
\Si_H =  \bigcup_{k \ge 0} \{\la_{k,\pm}\}.
$$
The half-eigenvalues are increasing, in the sense that
\begin{equation}  \label{bermonot.eq}
k' > k \ \Longrightarrow \ \la_{k',\nu'} > \la_{k,\nu},
\quad \text{for each $\nu',\,\nu \in \{\pm\}$},
\end{equation}
and $\lim_{k\to\infty} \la_{k,\nu}=\infty$.
Furthermore, $\pm u_{0,\pm}>0$ on $(0,1)$.
\end{thm}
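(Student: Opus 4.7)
The natural tool is a shooting method combined with a generalised Pr\"ufer-type transformation adapted to the $p$-Laplacian, this being the approach used in \cite{RYN2}. First I would fix $\nu\in\{+,-\}$ and, for each $\la\in\R$, consider the initial value problem
\begin{equation*}
-\phi_p(u')' - a_+\phi_p(u^+) + a_-\phi_p(u^-) = \la\phi_p(u), \qquad u(0)=0,\ \ u'(0)=\nu.
\end{equation*}
Under the continuity of $a_\pm$ this IVP admits a unique global $C^1$ solution $u(\cdot;\la,\nu)$, depending continuously on $\la$. The only delicate point is uniqueness at the zeros of $u$ when $p<2$; this can be handled by writing the equation as a Hamiltonian system in the $(u,\phi_p(u'))$-plane and checking that trajectories cross the axes transversally, which simultaneously shows that every zero of a non-trivial solution is simple.

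Next I would introduce the $p$-trigonometric functions $\sin_p,\cos_p$ with half-period $\pi_p$ generated by the autonomous equation $\phi_p(v')'+(p-1)\phi_p(v)=0$, and use a Pr\"ufer-type substitution of the form
\begin{equation*}
u(x)=r(x)\sin_p(\theta(x)), \qquad \phi_p(u'(x))=\phi_p(r(x))\cos_p(\theta(x)),
\end{equation*}
possibly with an $\la$-dependent rescaling of the independent variable on each nodal interval. The equation then decouples into a scalar ODE for $\theta$ and a linear ODE for $\log r$, so $r$ never vanishes and zeros of $u(\cdot;\la,\nu)$ in $[0,1]$ correspond exactly to $\theta(x;\la,\nu)\in\pi_p\mathbb{Z}$. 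On intervals where $u>0$, respectively $u<0$, the right-hand side of the $\theta$-equation is strictly increasing in $\la+a_+$, resp.\ $\la+a_-$; a Sturmian-type comparison yields that $\theta$ crosses each multiple of $\pi_p$ transversally and that $\la\mapsto\theta(1;\la,\nu)$ is continuous and strictly increasing, with limits $0$ as $\la\to-\infty$ and $+\infty$ as $\la\to+\infty$.

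Because $u(\cdot;\la,\nu)\in S_{k,\nu}$ iff $\theta(1;\la,\nu)=(k+1)\pi_p$, this monotonicity gives, for each $k\ge 0$ and each $\nu\in\{\pm\}$, a unique $\la_{k,\nu}\in\R$ and corresponding $u_{k,\nu}=u(\cdot;\la_{k,\nu},\nu)$ with $u_{k,\nu}'(0)=\nu$. The positive homogeneity of \eqref{heval.eq} then yields the full ray $\{tu_{k,\pm}:t>0\}$ of associated half-eigenfunctions. Since any non-trivial solution of \eqref{heval.eq} has only simple zeros and hence lies in some $S_{k,\nu}$ (cf.\ Remark~\ref{wellposedness.rem}), one obtains the spectral description $\Si_H=\bigcup_{k\ge 0}\{\la_{k,\pm}\}$. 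The divergence $\la_{k,\nu}\to\infty$ as $k\to\infty$ follows from the fact that $\theta(1;\la,\nu)$ stays bounded on bounded $\la$-intervals, and the full ordering \eqref{bermonot.eq} across both signs follows from a direct Sturm-type comparison between $u_{k,\nu}$ and $u_{k',\nu'}$ via their numbers of sign changes. Finally $\pm u_{0,\pm}>0$ on $(0,1)$ is immediate because $u_{0,\pm}$ has no interior zero and $\pm u_{0,\pm}'(0)=1>0$.

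The main obstacle, I expect, is establishing the \emph{strict} monotonicity of $\theta(1;\cdot,\nu)$ in $\la$ in the presence of the jumping coefficient. The coefficient in the $\theta$-equation switches between two distinct $\la$-dependent functions as $u$ changes sign, so a classical Sturm comparison does not apply directly. One must exploit that \emph{both} coefficients are strictly monotone in $\la$ simultaneously, together with the transversal crossings of $\theta$ through $\pi_p\mathbb{Z}$, to rule out any $\la$-interval of constancy; a similar care is needed in the cross-sign comparisons required for \eqref{bermonot.eq}.
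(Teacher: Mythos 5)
First, a point of reference: the paper does not prove Theorem~\ref{heval.thm} at all --- it is quoted from \cite{RYN2}, so there is no internal proof to measure your argument against. That said, your shooting/Pr\"ufer strategy is the method used in \cite{RYN2}, and the IVP well-posedness you need for the homogeneous problem is exactly Remark~\ref{wellposedness.rem}, which the paper imports from \cite[Theorem~5]{RW2}. The overall architecture is right: strict monotonicity of the angle at $x=1$ in $\la$ gives existence and uniqueness of $\la_{k,\nu}$ for each fixed $\nu$, positive homogeneity of \eqref{heval.eq} gives the rays $\{tu_{k,\pm}:t>0\}$, and simplicity of zeros gives $\Si_H=\bigcup_{k}\{\la_{k,\pm}\}$.

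The genuine gap is in \eqref{bermonot.eq}. A ``direct Sturm-type comparison between $u_{k,\nu}$ and $u_{k',\nu'}$'' does not go through on nodal intervals where the two solutions have opposite signs: there they satisfy half-linear equations with coefficients $a_++\la_{k',\nu'}$ and $a_-+\la_{k,\nu}$ respectively, which are not comparable for general continuous $a_\pm$, so the comparison theorem gives nothing. You flag this in your final paragraph but do not supply the missing ingredient, and it is in fact the harder half of the theorem, not a secondary instance of the monotonicity issue. What is needed is an interlacing lemma for the zeros of $\Psi_{\la,+}$ and $\Psi_{\la,-}$ at the \emph{same} $\la$: writing their zeros as $0=x_0<x_1<\cdots$ and $0=y_0<y_1<\cdots$, one shows $x_i<y_{i+1}$ and $y_i<x_{i+1}$. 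The key observation is that the $(i+1)$-th nodal interval of one solution and the $i$-th of the other carry the \emph{same} sign, so on their overlap both functions solve the \emph{same} equation and Sturm \emph{separation} (rather than comparison) applies; the contrary assumption forces one solution to vanish in the interior of one of its own nodal intervals. From $x_{k+1}=1$ and the interlacing one deduces that $\Psi_{\la_{k,\nu},-\nu}$ has at most $k+1$ zeros in $(0,1]$, i.e.\ $\theta(1;\la_{k,\nu},-\nu)<(k+2)\pi_p$, and then strict monotonicity of $\theta(1;\cdot,-\nu)$ yields $\la_{k',-\nu}>\la_{k,\nu}$ for $k'>k$. Without this (or an equivalent device) the cross-sign part of \eqref{bermonot.eq} is unproved. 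The remaining steps --- strict monotonicity of the angle despite the switching coefficient, the behaviour as $\la\to\pm\infty$, and $\pm u_{0,\pm}>0$ --- are essentially as you describe.
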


It will also be convenient to introduce the following notation:
for each
$k \ge 0$, let
$$
\la_{k,\min} = \min \{ \la_{k,+}, \la_{k,-} \}\quad\text{and}\quad
\la_{k,\max} = \max \{ \la_{k,+}, \la_{k,-} \},
$$
and let $u_{k,\min}, \ u_{k,\max}$ denote the
corresponding half-eigenfunctions.

The discussion of solvability conditions for
\eqref{nlslvble.eq}
relies upon the position of $\la$ with respect to the spectrum $\Si_H$.
When $\la\not\in\Si_H$ the problem
\eqref{nlslvble.eq} is said to be {\em non-resonant},
and this case has been extensively studied,
see \cite{RYN2} and the references therein for more information.
Briefly, the main results are as follows:

\medskip
\noindent
$(a)$ \ if
$\la_{k,\max} < \la < \la_{k+1,\min}$, for some $k \ge 0$
(or $\la < \la_{0,\min}$),
then \eqref{nlslvble.eq} has a solution for any $f$
(see \cite[Theorem 4.1]{RYN2});

\medskip
\noindent
$(b)$ \ if
$\la_{k,\min} < \la < \la_{k,\max}$, for some $k \ge 0$,
then there exists $h \in C^0[0,1]$
(independent of $u$)
such that \eqref{nlslvble.eq}, with $f = h$, has no solution
(see \cite[Theorem~5.1]{RYN2}).

\medskip
The {\em constant coefficient} case
(when $a_\pm$ are constant)
has been extensively investigated.
In this case equation \eqref{heval.eq} can be written as
\begin{equation}\label{heval2.eq}
-\D_p(u) =
\al_+ \phi_p(u^+) - \al_- \phi_p(u^-) ,
\quad u \in D_p ,
\end{equation}
with $\alpha_\pm = a_\pm + \la$,
and the solvability conditions can be equivalently formulated in
terms of the \fuc spectrum, defined by
$$
\Si_F=\{(\al_+,\al_-)\in\R^2:
\text{\eqref{heval2.eq} has a non-trivial solution $u$}\}.
$$
Clearly, $\la\in\Si_H \Leftrightarrow (\al_+,\al_-)\in\Si_F$,
and the problem \eqref{heval2.eq} is
non-resonant if and only if $(\al_+,\al_-) \not\in \Si_F$.
The set $\Si_F$ is known explicitly
(see for instance \cite{DRA}).
Geometrically, it can be described as the union
$\Si_F = \cup_{k=0}^\infty \Si_k$,
where each set $\Si_k$, $k \ge 1$, consists of a pair of hyperbolic
curves
($\Si_0$ consists of a horizontal and a vertical line),
and the eigenfunctions corresponding to a point
$(\al_+,\al_-) \in \Si_k$
belong to the set $S_{k,+} \cup S_{k,-}$.
In the non-resonant, constant coefficient case, solvability conditions
for \eqref{nlslvble.eq} can be formulated in terms of the location
of the point $(\al_+,\al_-) \in \R^2$ relative to $\Si_F$ ---
see for example \cite{DAN,DRA}, and references therein.
In this case the half-eigenvalue and \fuc spectrum conditions are
equivalent.
In the variable  coefficient case, the $\Si_H$ solvability conditions
are more general than the $\Si_F$ conditions,
see \cite{RYN2}.

When $\la \in\Si_H$ (or, in the constant coefficient case,
$(\al_+,\al_-)\in\Si_F$), the problem is said to be  {\em resonant},
and is more delicate than the non-resonant case.
In particular, further (Landesman-Lazer) conditions on $f$, and
its interaction with the half-eigenfunctions, are required to obtain
solutions.

\medskip

The proof of our main result will
be based on a shooting method, and will make extensive use of the
properties of the
solutions $\Psi_{\la,\pm} \in D_p$ of the following initial value problems:
\begin{gather}
-\phi_p(\Psi_{\la,\pm}')' - a_+ \phi_p(\Psi_{\la,\pm}^+)
+ a_- \phi_p(\Psi_{\la,\pm}^-)
  = \la \phi_p(\Psi_{\la,\pm}) \quad\text{on} \ (0,1),
  \label{Phi_la_pm.eq}
\\
\Psi_{\la,\pm}(0) = 0 , \quad  \Psi_{\la,\pm}'(0) = \pm 1 .
  \label{Phi_la_pm_bc.eq}
\end{gather}

\begin{remark}\label{wellposedness.rem}
For all $\la \in \R$, problems \eqref{Phi_la_pm.eq}-\eqref{Phi_la_pm_bc.eq}
have unique solutions $\Psi_{\la,\pm} \in D_p$
(see, for example, \cite[Theorem~5]{RW2}).
In addition, by uniqueness, each $\Psi_{\la,\pm}$ has only simple zeros,
and hence $\Psi_{\la,\pm} \in S_{k,\pm}$, for some $k \ge 0$.
\end{remark}

In the course of the proof we will need certain technical results about
uniqueness and dependence on the parameters for some perturbations of the
problems
\eqref{Phi_la_pm.eq}-\eqref{Phi_la_pm_bc.eq}
(namely \eqref{psi_pm2.eq}-\eqref{psi_pm_bc2.eq}).
Due to the degeneracy of \eqref{Phi_la_pm.eq}
(resp. \eqref{psi_pm2.eq}) when the derivative of the solution vanishes,
these results do not fall within the scope of standard ODE theory,
and their proof will require the following technical assumption
on the coefficients $a_\pm$ (see Remark~\ref{wellpos.rem}).

\medskip\noindent
Let $\la \in \R$ be fixed.

\bbl
\item[$(C_\la)$]
If $p > 2$ then,
for each $\nu\in\{\pm\}$ and $x\in[0,1]$
for which $\Psi'_{\la,\nu}(x) = 0$,
$$
a_\pm(x) \ne 0 \quad \text{if} \quad \pm \Psi_{\la,\nu}(x)>0
$$
(by Remark~\ref{wellposedness.rem}, $\Psi_{\la,\nu}(x) \neq 0$ whenever
$\Psi'_{\la,\nu}(x) = 0$).
\ebl

\noindent
In particular, condition $(C_\la)$ is trivially satisfied if, for instance,
$a_\pm$ are positive constants --- this is the case of interest when
considering the \fuc spectrum.
\medskip

\medskip
We can now state our main result.

\begin{thm}  \label{landesman.thm}
Suppose that $f$ satisfies conditions $(f1)$-$(f2)$,
and $\la \in \{ \la_{k,\pm} \}$ for some $k \ge 0$.
If $p > 2$, suppose also that condition $(C_\la)$ holds.
\item[(A)]
If $\la_{k,\min} = \la_{k,\max}$ and
\begin{equation}\label{u_k_min_u_k_max.eq}
\left( \int_0^1  f_{+} u_{k,\min}^+ - f_{-} u_{k,\min}^-  \right)
\cdot
\left( \int_0^1  f_{+} u_{k,\max}^+ - f_{-} u_{k,\max}^-  \right)
> 0 ,
\end{equation}
then  \eqref{nlslvble.eq} has a solution.
\item[(B)]
If $\la_{k,\min} < \la_{k,\max}$ and
\begin{align}
&(\mathrm{B}1)
\quad\text{if} \ \la=\la_{k,\min},\qquad
\int_0^1 \big(f_{+} u_{k,\min}^+ - f_{-} u_{k,\min}^-  \big) < 0 ;
\label{u_k_min.eq} \\
&(\mathrm{B}2)
\quad \text{if} \ \la=\la_{k,\max}, \qquad
\int_0^1 \big(f_{+} u_{k,\max}^+ - f_{-} u_{k,\max}^-  \big) > 0 .
 \label{u_k_max.eq}
\end{align}
then  \eqref{nlslvble.eq} has a solution.
\end{thm}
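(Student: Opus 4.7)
The proof is a shooting argument. For $\tau \in \R$, let $\psi_\tau$ denote the solution of the initial value problem obtained by adjoining $\psi_\tau(0)=0$, $\psi_\tau'(0)=\tau$ to \eqref{eq.eq}; solving \eqref{nlslvble.eq} reduces to finding $\tau \in \R$ with $\psi_\tau(1)=0$. I would establish that the shooting map $\tau \mapsto \psi_\tau(1)$ is continuous on $\R$ and pin down the sign of $\psi_\tau(1)$ for $|\tau|$ large, so that the intermediate value theorem supplies a zero. Well-posedness of this perturbed IVP and continuous dependence on $\tau$ follow from (mild extensions of) the results quoted in Remark~\ref{wellposedness.rem}; when $p>2$, hypothesis $(C_\la)$ enters here to secure uniqueness across points where $\psi_\tau'$ vanishes.

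\textbf{Asymptotics.} Set $w_\tau := \psi_\tau/\tau$. By the $p$-homogeneity of $\phi_p$, $w_\tau$ satisfies a version of \eqref{Phi_la_pm.eq} with a perturbation $f(x,\tau w_\tau)/|\tau|^{p-1}$ that vanishes uniformly thanks to \eqref{fint.eq}. Uniqueness (Remark~\ref{wellposedness.rem}) together with continuous dependence then forces $w_\tau \to \Psi_{\la,+}$ in $C^1[0,1]$ as $\tau \to +\infty$, and $w_\tau \to -\Psi_{\la,-}$ as $\tau \to -\infty$ (the identity $(\tau w)^\pm = |\tau|w^\mp$ for $\tau<0$ swaps $a_\pm$ in the homogenised equation, and one checks that $-\Psi_{\la,-}$ solves the swapped IVP with the correct initial data). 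In particular, for $|\tau|$ large, $\psi_\tau$ inherits the nodal structure of $u_{k,+}$ or $u_{k,-}$ according to the sign of $\tau$.

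\textbf{Landesman-Lazer identity.} The core step is to derive, for each $\nu \in \{\pm\}$ and $\nu\tau>0$ large, an identity of the form
\[
A_\tau\,\psi_\tau(1) \;=\; -\int_0^1 f(x,\psi_\tau(x))\,\Psi_{\la,\nu}(x)\,\dif x + R_\tau,
\]
with $A_\tau$ converging to a nonzero multiple of $\Psi_{\la,\nu}'(1)$ and $R_\tau \to 0$ as $|\tau|\to\infty$. This is obtained by testing \eqref{eq.eq} against $\Psi_{\la,\nu}$ and subtracting the half-eigenvalue equation \eqref{Phi_la_pm.eq} tested against $\psi_\tau$, via a Picone-type manipulation that compensates for the non-additivity of $\phi_p$; the remainder $R_\tau$ is controlled using $(f2)$. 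Splitting the right-hand integral over $\{\Psi_{\la,\nu} \gtrless 0\}$ and applying $(f1)$ with dominated convergence (justified by \eqref{fint.eq}) yields
\[
\int_0^1 f(x,\psi_\tau)\,\Psi_{\la,\nu}\,\dif x \;\longrightarrow\; \int_0^1 \big( f_+ u_{k,\nu}^+ - f_- u_{k,\nu}^- \big)\,\dif x
\]
for each $\nu$ with $\la = \la_{k,\nu}$, as $|\tau| \to \infty$ in the matching direction; so $\sgn \psi_\tau(1)$ at infinity is dictated by the very Landesman-Lazer integrals in the hypotheses.

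\textbf{Conclusion and main obstacle.} In Case~(B) only one of $\la_{k,\min}, \la_{k,\max}$ equals $\la$: the resonant shooting direction gives a definite sign of $\psi_\tau(1)$ at infinity controlled by \eqref{u_k_min.eq} or \eqref{u_k_max.eq}, while in the non-resonant direction $w_\tau(1) \to \Psi_{\la,\mp}(1) \neq 0$, so $|\psi_\tau(1)| \to \infty$ with a definite opposite sign, and the IVT then supplies a zero. In Case~(A) both directions are resonant; here the factors $A_\tau$ at $\tau \to +\infty$ and $\tau \to -\infty$ have opposite signs, because $u_{k,+}'(1)$ and $u_{k,-}'(1)$ do (by parity of the interior zero count, each zero flipping the sign of the derivative), so requiring the two Landesman-Lazer integrals to share a sign---which is exactly \eqref{u_k_min_u_k_max.eq}---forces $\psi_\tau(1)$ to change sign between the two limits. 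The principal obstacle is the Landesman-Lazer identity itself: without additivity of $\phi_p$, the usual $p=2$ integration by parts is unavailable, and the Picone-type calculation must produce a coefficient $A_\tau$ of definite nonzero sign uniformly in large $|\tau|$; this is the step where $(C_\la)$ is really used when $p>2$.
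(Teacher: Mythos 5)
Your overall architecture (shooting, asymptotic comparison with the half-eigenfunctions, a Landesman--Lazer integral dictating the sign of $\psi_\tau(1)$ at infinity) matches the paper's, but two of your steps are precisely the ones the paper has to work around, and as stated they do not go through.

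First, your ``Landesman--Lazer identity'' is obtained by testing \eqref{eq.eq} against $\Psi_{\la,\nu}$ and performing a ``Picone-type manipulation that compensates for the non-additivity of $\phi_p$''. This is the crux, and you leave it as an acknowledged obstacle rather than resolving it; the authors state explicitly that such integration-by-parts arguments ``seem to fail when $p\ne 2$'' (and Picone-type identities for the $p$-Laplacian require sign conditions that a sign-changing $u_{k,\nu}$, $k\ge 1$, does not satisfy). The paper's actual route avoids testing the nonlinear equation altogether: it rescales so that $\tau\to\pm\infty$ becomes $\ttau\to 0^+$ in \eqref{psi_pm2.eq}-\eqref{psi_pm_bc2.eq}, proves that $\ttau\mapsto\tpsi_\pm(\ttau)$ is differentiable at $\ttau=0$ (Proposition~\ref{deriv_psi0.prop}, the technical heart of the paper, where $(f2)$ and $(C_\la)$ are genuinely used to control the singular coefficients $|\Psi_\nu'|^{2-p}$ and $|\Psi_\nu^\pm|^{p-2}$), and then multiplies the resulting \emph{linear} variational equation \eqref{psi_pm_dtau.eq} by $u_{k,\pm}$ --- where integration by parts is unproblematic --- to produce exactly the boundary-term identity \eqref{bterms.eq} you are after. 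Without this (or some substitute derivation), your key identity is an assertion, not a proof.

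Second, you apply the intermediate value theorem to $\tau\mapsto\psi_\tau(1)$, claiming continuity on all of $\R$ with uniqueness secured by $(C_\la)$. But uniqueness for \eqref{tpsi_pm.eq}-\eqref{tpsi_pm_ic.eq} is only established for $|\tau|$ large (equivalently $|\ttau|$ small); for intermediate $\tau$ it can genuinely fail (the paper cites \cite{RW1}), and $(C_\la)$ only concerns the zeros of the derivative of the \emph{unperturbed} solutions $\Psi_{\la,\nu}$, so it does not rescue uniqueness away from the asymptotic regime. The shooting map is therefore multivalued in the middle of the interval $[-\tau_0,\tau_0]$ and the IVT does not apply to it directly. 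The paper bridges the sign change with a Kneser-type connectedness argument (Proposition~\ref{connectedness.prop}): the full set of solution values $V(1)$ over $\tau\in[-\tau_0,\tau_0]$ is connected, hence contains $0$. You need this (or a proof of global uniqueness, which is not available) to close the argument. Your asymptotic analysis and the sign bookkeeping in Cases (A) and (B) --- including the observation that $u_{k,+}'(1)$ and $u_{k,-}'(1)$ have opposite signs --- are consistent with the paper's Lemma~\ref{sign_tau_small.lem} and Lemma~\ref{Psi_Psid_sign.lem}.
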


\subsection{Previous results}  \label{prev_results.sec}

Existence conditions similar to those of Theorem~\ref{landesman.thm}
have been used for problems at resonance in many different contexts.
They seem to have first occurred in a paper by
Landesman and Lazer \cite{LL},
and are thus known as {\em Landesman-Lazer conditions},
but many papers since then have considered such problems.
Much of this literature has been concerned with asymptotically linear
(or, so-called, asymptotically `$p$-linear') problems,
that is, problems with $a_+=a_-$ so that the nonlinearity is not jumping,
and with resonance at an eigenvalue of the $p$-Laplacian,
rather than at a half-eigenvalue.
For brevity, we will not describe such results here.

However, various papers have also obtained existence results for resonant
problems with jumping nonlinearities of the form \eqref{nlslvble.eq},
see, for example, \cite{ASA,DONG,DR,FAB,PER,TOM}.
Most of these works deal with problems with constant coefficients
(that is, with $a_\pm$ constant)
and resonant with respect to the \fuc spectrum,
and give existence results based on Landesman-Lazer
conditions.

The papers \cite{DR,PER,TOM} consider \eqref{nlslvble.eq} in the constant
coefficient case, using a variational approach.
The paper \cite{TOM} considers the case $p=2$, whereas
\cite{DR,PER} deal with the general, quasilinear case, $p>1$
(\cite{PER} considers a partial differential equation problem).
We use a shooting method to study \eqref{nlslvble.eq}
with variable coefficients $a_\pm \in C^0[0,1]$, in the general
case $p>1$.
Except for some extra regularity assumptions on $f$
(which are required to ensure that certain initial value problems have
good solution properties),
we obtain more general results than those of \cite{DR,PER,TOM}, even in the
case of constant $a_\pm$.
In fact, \cite{DR} deals with the simpler case where $f$ does not depend
on $u$, and assumes that $(\al_+,\al_-) \in \Si_k$, with $k=0,1$ or 2.
On the other hand, \cite{PER,TOM} deal with a general nonlinearity $f$,
but require that
$$(\al_+,\al_-) \in \Si_k \cap (\la_{k-1},\la_{k+1})^2 \subset \R^2,$$
where $\la_k$, $k=0,1,\dots,$ are the eigenvalues of the $p$-Laplacian
(that is, $(\al_+,\al_-)$ lies on the portion of the \fuc spectrum
contained in the square whose vertices are at the points
$(\la_{k-1},\la_{k-1})$, $(\la_{k+1},\la_{k+1})$).
In addition, \cite{TOM} requires that $k$ be even ---
in the half-eigenvalue setting this corresponds to case (A) above.
In contrast to the results in these papers, our results hold for any point
$(\al_+,\al_-) \in \Si$
(and, more generally, in the variable coefficient case).

The papers \cite{ASA,DONG,FAB} consider the semilinear ($p=2$) case
only, with respectively Dirichlet, periodic, and general Sturm-Liouville
boundary conditions.
They all use an approach based on topological degree theory.
The papers \cite{ASA,FAB} consider the constant coefficient case,
while Dong \cite{DONG} considers the variable coefficient case.
His definition of resonance is expressed in terms of
`generalized resonant sets' that were defined and studied in
\cite{DONG0} --- these are essentially a non-constant version of the
\fuc spectrum, where the analogues of the sets $\Si_k$ depend on
$x\in[0,1]$.
Our result is somewhat similar to those of
\cite{ASA,DONG,FAB},
but we consider the general case, $p>1$.
However, in the case $p=2$ the Landesman-Lazer conditions of
\cite{ASA,DONG,FAB} do not require such a strict limiting behaviour of
$f$ as we assume in $(f1)$.
The main structural differences with our conditions can be briefly
described as follows.
In the \fuc spectrum context outlined above, let
$$
g(u) = \al_+ u^+ - \al_- u^- + f(u).
$$
The existence results of \cite{ASA,DONG,FAB} are based on conditions
similar to \eqref{u_k_min_u_k_max.eq}-\eqref{u_k_max.eq},
but these are expressed in terms of the `$\liminf$' and `$\limsup$' of $g$
as $u\to\pm\infty$.
In addition, they require that for $|u|$ sufficiently large,
the ratio $g(u)/u$ lies in a rectangular `box' whose vertices lie on
consecutive curves of the \fuc spectrum
(or in the generalized resonant sets in \cite{DONG}).
Instead, we use the limits in \eqref{fpm.eq} for our existence
conditions --- implying that $g(u)/u$ converges to one of these
vertices as $|u|\to\infty$
(more precisely $g(u)/u \to \al_\pm$ as $u\to \pm\infty$,
where $(\al_+,\al_-)\in\Si_F$) --- but we allow for oscillations outside
the box as we approach the limit.

We were unable to apply topological arguments similar to those of
\cite{ASA,DONG,FAB} to the general case $p > 1$, due to considerable
difficulties related to various integration by parts arguments,
which seem to fail when $p \ne 2$.

\section{Preliminary results }  \label{prelims.sec}

\subsection{Half-eigenvalues}  \label{hevals.sec}

We first describe some further preliminary results regarding the
half-eigenvalues.
By definition, for each $k \ge 0$ and $\nu \in \{\pm\}$,
\begin{equation}   \label{la_in_Si_iff.eq}
\text{$\la = \la_{k,\nu} \iff \Psi_{\la,\nu}(1)  = 0$
and 1 is the $(k+1)$th zero of $\Psi_{\la,\nu}$ in $(0,1]$}
\end{equation}
(recall that $\Psi_{\la,\pm} \in D_p$ are the solutions of the
initial value problems \eqref{Phi_la_pm.eq}-\eqref{Phi_la_pm_bc.eq}
introduced in Section~\ref{intro.sec}).
Of course, if $\la = \la_{k,\nu}$ then $\Psi_{\la,\nu}=u_{k,\nu}$.
In particular, $\la_{k,+} = \la_{k,-}$, for some $k \ge 0$,
iff $\Psi_{\la,+}(1)  = \Psi_{\la,-}(1)  = 0$.
We will also require the following results.

\begin{lemma}   \label{Psi_Psid_sign.lem}
If,
for some $k \ge 0$ and $\nu \in \{\pm\}$,
$\la = \la_{k,\nu} = \la_{k,\min} < \la_{k,\max}$
then
\begin{equation}  \label{Psi_Psid_sign.eq}
\Psi'_{\la,\nu}(1) \,\, \Psi_{\la,-\nu}(1) > 0 .
\end{equation}
If $\la_{k,\min} <\la_{k,\max} =  \la = \la_{k,\nu}$ then the inequality
in \eqref{Psi_Psid_sign.eq} is reversed.
\end{lemma}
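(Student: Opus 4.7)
\medskip
\noindent\textbf{Proof plan.} The strategy is to determine the signs of $\Psi'_{\la,\nu}(1)$ and $\Psi_{\la,-\nu}(1)$ separately by a sign chase, using that $\Psi_{\la,\pm}\in S_{k,\pm}$ has only simple zeros and therefore flips sign at each of them, and then to combine the two signs. The only non-combinatorial ingredient is the monotonicity of half-eigenvalues in Theorem~\ref{heval.thm} (and the underlying monotonicity of zeros of $\Psi_{\la,\pm}$ in $\la$).

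First I would pin down $\sgn\Psi'_{\la,\nu}(1)$. Since $\la=\la_{k,\nu}$, the characterization \eqref{la_in_Si_iff.eq} says $\Psi_{\la,\nu}$ has exactly $k$ zeros in $(0,1)$ together with a simple $(k+1)$-th zero at $x=1$. Starting from sign $\nu$ just to the right of $0$ (because $\Psi'_{\la,\nu}(0)=\nu$) and flipping at each of the $k$ interior zeros, $\Psi_{\la,\nu}$ has sign $\nu(-1)^k$ on the rightmost subinterval; as $x=1$ is a simple zero approached from that sign, $\sgn\Psi'_{\la,\nu}(1)=-\nu(-1)^k=\nu(-1)^{k+1}$.

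Next I would locate the zeros of $\Psi_{\la,-\nu}$ relative to $x=1$. In case (A), $\la=\la_{k,\min}<\la_{k,-\nu}$ and, by \eqref{bermonot.eq}, also $\la>\la_{k-1,-\nu}$ (vacuously if $k=0$). The strict, continuous monotonicity in $\la$ of the zeros of $\Psi_{\la,-\nu}$ on $(0,1]$---which is precisely the ingredient making the indexing $\la_{k,\pm}$ of Theorem~\ref{heval.thm} well-defined and which I would quote from \cite{RYN2}---then forces the $k$-th zero of $\Psi_{\la,-\nu}$ to lie in $(0,1)$ and its $(k+1)$-th zero to lie strictly beyond $1$. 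Thus $\Psi_{\la,-\nu}$ has exactly $k$ zeros in $(0,1)$, and the same sign chase starting from $-\nu$ yields $\sgn\Psi_{\la,-\nu}(1)=(-\nu)(-1)^k=\nu(-1)^{k+1}$; the product is then $[\nu(-1)^{k+1}]^2=1>0$. In the reverse case $\la=\la_{k,\max}$, the monotonicity \eqref{bermonot.eq} gives instead $\la_{k,-\nu}<\la<\la_{k+1,-\nu}$, so $\Psi_{\la,-\nu}$ has exactly $k+1$ zeros in $(0,1)$ and $\sgn\Psi_{\la,-\nu}(1)=(-\nu)(-1)^{k+1}=\nu(-1)^k$; the product with $\Psi'_{\la,\nu}(1)$ is now $\nu^2(-1)^{2k+1}=-1<0$, which is the reversed inequality.

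The main obstacle is not the sign bookkeeping, which is purely combinatorial, but the strict, continuous monotonicity in $\la$ of the zeros of $\Psi_{\la,\pm}$ on $[0,1]$. I would not reprove this Sturm-type fact here; it is exactly the property of the initial value problem \eqref{Phi_la_pm.eq}--\eqref{Phi_la_pm_bc.eq} on which Theorem~\ref{heval.thm} is built in \cite{RYN2}, and I would simply cite it.
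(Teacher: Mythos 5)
Your proof is correct and follows essentially the same route as the paper's: both count the zeros of $\Psi_{\la,\pm}$ in $(0,1)$ via Sturmian monotonicity in $\la$ (exactly $k$ each when $\la=\la_{k,\min}$, and $k$ versus $k+1$ when $\la=\la_{k,\max}$) and then compare signs at $x=1$. Your explicit $(-1)^k$ parity bookkeeping is just a more detailed version of the paper's observation that $\Psi_{\la,+}$ and $\Psi_{\la,-}$ have opposite (resp.\ equal) signs on an interval $(1-\de,1)$.
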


\begin{proof}
Suppose that $\la = \la_{k,\nu} = \la_{k,\min} < \la_{k,\max}$.
Then it follows from Theorem~\ref{heval.thm} and
\eqref{la_in_Si_iff.eq}, together with standard Sturmian-type
theory for the solutions of
\eqref{Phi_la_pm.eq},  \eqref{Phi_la_pm_bc.eq},
that each of the functions $\Psi_{\la,\pm}$ has exactly $k$ zeros in
the interval $(0,1)$ (and their derivatives are non-zero at these zeros,
so they change sign at them).
Hence, it follows from the boundary conditions \eqref{dbc.eq}
that $\Psi_{\la,\pm}$ are non-zero and have opposite sign on some
interval $(1-\de,1)$, for sufficiently small $\de > 0$,
which proves \eqref{Psi_Psid_sign.eq}.
The proof of the other case is similar.
\end{proof}

The following result can be obtained readily from
Lemma~\ref{Psi_Psid_sign.lem} and its proof.

\begin{cor}   \label{la_in_Si_gap_iff.cor}
\begin{equation}   \label{la_in_Si_gap_iff.eq}
\la_{k,\min} < \la < \la_{k,\max}
\iff
\Psi_{\la,+}(1) \Psi_{\la,-}(1) > 0 .
\end{equation}
\end{cor}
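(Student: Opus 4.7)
The plan is to pin down the sign of the continuous map $\Phi(\la):=\Psi_{\la,+}(1)\Psi_{\la,-}(1)$ on each connected component of $\R\setminus\Si_H$, combining continuity of $\la\mapsto\Psi_{\la,\pm}(1)$ with the local sign information at points of $\Si_H$ furnished by Lemma~\ref{Psi_Psid_sign.lem} and standard Sturmian monotonicity of zeros in $\la$.

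First, by \eqref{la_in_Si_iff.eq}, $\Phi$ vanishes precisely on $\Si_H$. The strict monotonicity \eqref{bermonot.eq} of the half-eigenvalues then expresses $\R\setminus\Si_H$ as the disjoint union of the resonance intervals $I_k:=(\la_{k,\min},\la_{k,\max})$ (when $\la_{k,\min}<\la_{k,\max}$) and the non-resonance gaps $(-\infty,\la_{0,\min})$ and $(\la_{k,\max},\la_{k+1,\min})$, $k\ge 0$. Since $\Phi$ is continuous and non-vanishing on each of these, it has constant sign there, so it suffices to check that this sign is positive on every $I_k$ and negative on every gap.

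For $I_k$, I would let $\nu\in\{\pm\}$ satisfy $\la_{k,\min}=\la_{k,\nu}$. Standard Sturmian monotonicity for \eqref{Phi_la_pm.eq} says that as $\la$ increases through $\la_{k,\min}$, the $(k+1)$-st zero of $\Psi_{\la,\nu}$ moves strictly leftward from $x=1$ into $(0,1)$; hence for $\la$ just above $\la_{k,\min}$ one has $\sgn\Psi_{\la,\nu}(1)=\sgn\Psi'_{\la_{k,\min},\nu}(1)$, while $\sgn\Psi_{\la,-\nu}(1)=\sgn\Psi_{\la_{k,\min},-\nu}(1)$ by plain continuity. Lemma~\ref{Psi_Psid_sign.lem} says that the product of the two right-hand sides is strictly positive, so $\Phi(\la)>0$ just above $\la_{k,\min}$, hence on all of $I_k$. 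The gap $(\la_{k,\max},\la_{k+1,\min})$ is handled analogously at the left endpoint $\la_{k,\max}$, but the reversed inequality in Lemma~\ref{Psi_Psid_sign.lem} now yields $\Phi<0$. On $(-\infty,\la_{0,\min})$ neither $\Psi_{\la,\pm}$ has a zero in $(0,1]$ (otherwise a rescaling argument would produce a half-eigenvalue below $\la_{0,\min}$), so each $\Psi_{\la,\pm}$ preserves the sign $\pm 1$ of its initial derivative throughout $[0,1]$, and $\Phi(\la)<0$ there as well.

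The only non-routine step is the Sturmian zero-monotonicity claim: namely, that the $(k+1)$-st zero of $\Psi_{\la,\nu}$ depends continuously and strictly monotonically on $\la$, which governs the sign change of $\Psi_{\la,\nu}(1)$ as $\la$ crosses a half-eigenvalue. This is standard for the $p$-Laplacian initial value problem and is already implicit in the description of $\Si_H$ in \cite{RYN2} and the uniqueness theorem of \cite{RW2}; once it is in hand, the rest is pure sign bookkeeping from Lemma~\ref{Psi_Psid_sign.lem}.
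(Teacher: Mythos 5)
Your proposal is correct and takes essentially the paper's route: the paper gives no separate argument, saying only that the corollary ``can be obtained readily from Lemma~\ref{Psi_Psid_sign.lem} and its proof'', which is precisely the Sturmian zero-counting and sign bookkeeping you carry out (continuity in $\la$, constant sign of $\Psi_{\la,+}(1)\Psi_{\la,-}(1)$ on each component of $\R\setminus\Si_H$, and sign determination near the endpoints). One small caveat: at a gap endpoint with $\la_{k,+}=\la_{k,-}$ both $\Psi_{\la,\pm}(1)$ vanish, so Lemma~\ref{Psi_Psid_sign.lem} itself (whose hypothesis requires $\la_{k,\min}<\la_{k,\max}$) does not apply there; you must instead invoke the zero-counting from its \emph{proof} to get $\Psi'_{\la,+}(1)\,\Psi'_{\la,-}(1)<0$ and hence $\Phi<0$ just above such a point --- which is exactly the ``and its proof'' part of the paper's remark.
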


\begin{remark}
In the case $p=2$ the result of Corollary~\ref{la_in_Si_gap_iff.cor}
was, in essence, proved in
\cite[Proposition 1]{DAN} and \cite[Theorem 5.5]{RYN1},
although it was stated somewhat differently, and was obtained in the
course of the proof of a different result
(\cite{DAN} considered the constant coefficient
\fuc case, while \cite{RYN1} considered the half-eigenvalue case with
coefficients $a_\pm \in L^\infty(0,1)$).
Lemma~\ref{Psi_Psid_sign.lem} was not needed in these papers so was not
considered there.
The general case $2 \ne p>1$ was not considered in either of these papers.
\end{remark}

\section{Proof of Theorem \ref{landesman.thm}}  \label{landesman.sec}

To simplify the notation slightly we observe that
by replacing $a_\pm$ by  $a_\pm + \la$,
we may suppose, without loss of generality, that $\la = 0$.
With this supposition we will now write
$\Psi_{\pm} := \Psi_{0,\pm}$.

For any $\tau \in \R$, let $\psi(\tau) \in D_p$ denote a
solution of the initial value problem
\begin{gather}
-\phi_p(\psi(\tau)')' = a_+ \phi_p(\psi(\tau)^+) - a_- \phi_p(
\psi(\tau)^-)
  +  f(\psi(\tau)) ,
  \label{tpsi_pm.eq}
\\
\psi(\tau)(0) = 0 ,
\quad  \psi(\tau)'(0) =  |\tau|^{\frac{1}{p-1}}\sgn\tau
\label{tpsi_pm_ic.eq}
\end{gather}
(in general, solutions of
\eqref{tpsi_pm.eq}-\eqref{tpsi_pm_ic.eq}
need not be unique).
By definition,
$\psi(\tau)$ satisfies \eqref{eq.eq}-\eqref{dbc.eq}
(with $\la=0$) if and only if
\begin{equation}  \label{psi_pm_zero.eq}
\psi(\tau)(1) = 0.
\end{equation}
We will prove Theorem~\ref{landesman.thm} by showing that there
exists $\tau \in \R$ and a corresponding solution $\psi(\tau)$ of
\eqref{tpsi_pm.eq}-\eqref{tpsi_pm_ic.eq}
for which \eqref{psi_pm_zero.eq} holds.
This will require several steps.

\bil
\item
Firstly, we will show that there exists
a `large' $\tau_0 > 0$ such that the problems
\eqref{tpsi_pm.eq}-\eqref{tpsi_pm_ic.eq},
with $\tau = \pm \tau_0$,
have unique solutions $\psi(\pm \tau_0)$,
and these satisfy
\beq  \label{psi_tau_large.eq}
\psi(\tau_0)(1) \, \psi(-\tau_0)(1) < 0 .
\eeq
To do this it will be convenient to first rescale the problem so
that a `large' $\tau$ corresponds to a `small' $\ttau$.
\item
If the values $\psi(\tau)(1)$ were unique and depended continuously on
$\tau \in \R$ then \eqref{psi_pm_zero.eq} would follow immediately from
\eqref{psi_tau_large.eq}.
Unfortunately, this deduction is not so simple since, for `small'
$\tau$, the solutions of the problems
\eqref{tpsi_pm.eq}-\eqref{tpsi_pm_ic.eq}
need not be unique.
However, we will use a connectedness property of the set of solutions of
\eqref{tpsi_pm.eq}-\eqref{tpsi_pm_ic.eq}
to obtain \eqref{psi_pm_zero.eq} from
\eqref{psi_tau_large.eq}.
\eil
We will carry out these steps in the following subsections.

\subsection{A rescaled problem}\label{rescale.sec}

For any $\ttau \in \R$, we consider the initial value problems
\begin{gather}
-\phi_p(u')' = a_+ \phi_p(u^+) - a_- \phi_p(u^-) +
\ttau f\big(|\ttau|^{-\frac{1}{p-1}}u\big) , 	\label{psi_pm2.eq}
\\
u(0) = 0 , \quad  u'(0) = \pm 1 ,		\label{psi_pm_bc2.eq}
\end{gather}
where we regard the final term on the right hand side of
\eqref{psi_pm2.eq}
as 0 when $\ttau = 0$
(this is consistent with  \eqref{fint.eq}).
It then follows from Remark~\ref{wellposedness.rem} that,
when $\ttau = 0$, these problems
have the unique solutions $\Psi_{\pm} \in D_p$,
defined on the whole interval $[0,1]$.
However, in general, solutions of
\eqref{tpsi_pm.eq}-\eqref{tpsi_pm_ic.eq}
need not be unique (see \cite{RW1}).

\begin{remark}
Comparing the problems
\eqref{tpsi_pm.eq}-\eqref{tpsi_pm_ic.eq}
and
\eqref{psi_pm2.eq}-\eqref{psi_pm_bc2.eq}
shows that
\begin{equation}  \label{psi_tau_scaling.eq}
\psi(\tau) = |\tau|^{\frac{1}{p-1}} \tpsi_{\sgn \tau} (|\tau|^{-1}),
 \quad \tau \ne 0.
\end{equation}
(Given the non-uniqueness of the solutions of these problems in
general,
we mean by this that if one side of \eqref{psi_tau_scaling.eq} is a
solution then the other side is a solution, of their
respective problems.)
Thus, we will show that \eqref{psi_tau_large.eq} holds
for some `large' $\tau_0$
by proving that $\tpsi_+(\ttau_0)(1) \tpsi_-(\ttau_0)(1) < 0$
for some `small' $\ttau_0 > 0$
(see Lemma~\ref{sign_tau_small.lem} and
Proposition~\ref{sign_tau_large.prop} below).
\end{remark}

\begin{remark}\label{wellpos.rem}
Despite the general lack of uniqueness of solutions of
\eqref{psi_pm2.eq}-\eqref{psi_pm_bc2.eq},
we will show that uniqueness holds when $|\ttau|$ is sufficiently small.
Our proof of this in the case $p>2$ will use condition $(C_\la)$,
which implies that
\begin{equation}\label{RHScondition}
- a_+(x) \phi_p(\Psi_\nu^+(x)) + a_-(x)\phi_p(\Psi_\nu^-(x))  \neq 0
\quad\text{whenever}\quad \Psi'_\nu(x) = 0 ,
\end{equation}
and this will ensure that solutions of the initial value
problems
\eqref{psi_pm2.eq}-\eqref{psi_pm_bc2.eq}
(for sufficiently small $|\ttau|$)
do not behave badly near points where their derivatives vanish.
\end{remark}

\begin{prop} \label{ivp_props_tpsi.prop}
There exists $\tde > 0$ such that if $|\ttau| < \tde$ then$:$
\bal
\item
\eqref{psi_pm2.eq}-\eqref{psi_pm_bc2.eq}
has a unique solution $\tpsi_{\pm}(\ttau) \in D_p;$
\item
the mappings
$\ttau \to \tpsi_{\pm}(\ttau) : (-\tde,\tde) \to C^1[0,1]$
are continuous.
In particular,
\beq  \label{psi_eq_Phiz.eq}
\tpsi_{\pm}(0) = \Psi_{\pm} .
\eeq
\eal
\end{prop}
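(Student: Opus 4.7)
The plan is to prove the three statements simultaneously, for $|\ttau|$ small, by combining uniform $C^1$ a priori bounds with the already-known uniqueness of $\Psi_\pm$ at $\ttau = 0$, and by handling the degeneracy of $\phi_p$ at zeros of $u'$ via condition $(C_\la)$ (when $p > 2$).

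First, from \eqref{fint.eq} and a double integration of \eqref{psi_pm2.eq}, any solution of \eqref{psi_pm2.eq}-\eqref{psi_pm_bc2.eq} satisfies a $C^1[0,1]$ bound uniform in $\ttau$ on bounded sets; a standard Peano-type argument then delivers a solution on all of $[0,1]$ for every $\ttau \in \R$. For continuous dependence, let $\ttau_n \to 0$ and let $u_n$ be any choice of corresponding solutions. The $C^1$ bound together with equation \eqref{psi_pm2.eq} makes $\phi_p(u_n')$ equicontinuous, hence so is $u_n'$; Arzel\`a--Ascoli produces a $C^1$-convergent subsequence whose limit solves \eqref{psi_pm2.eq}-\eqref{psi_pm_bc2.eq} with $\ttau = 0$, and Remark~\ref{wellposedness.rem} identifies this limit as $\Psi_\pm$. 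Hence $u_n \to \Psi_\pm$ in $C^1[0,1]$ along the full sequence, which yields \eqref{psi_eq_Phiz.eq} and, once uniqueness is in hand, statement (b).

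It remains to establish uniqueness. Let $u_1, u_2$ be two solutions with $|\ttau| < \tde$; by the compactness argument above, both are $C^1$-close to $\Psi_\pm$ uniformly in $\ttau$. Since $\Psi_\pm \in S_{k,\pm}$ for some $k \ge 0$, the derivative $\Psi_\pm'$ has only finitely many zeros $x_1, \dots, x_m \in [0,1]$, at each of which $\Psi_\pm(x_i) \ne 0$. On any subinterval of $[0,1]$ separated from $\{x_1, \dots, x_m\}$, both $u_j'$ are bounded away from $0$ for $|\ttau|$ small; writing \eqref{psi_pm2.eq} as the first-order system $u' = \phi_{p'}(v)$, $v' = g_\ttau(x,u)$ with $v = \phi_p(u')$, the map $\phi_{p'}$ is $C^1$ on the relevant range of $v$, while hypothesis $(f2)$---whose exponent $\rho$ combines with the $|\ttau|^{-1/(p-1)}$ rescaling in \eqref{psi_pm2.eq} to give a $u$-Lipschitz bound on $g_\ttau$ that is uniform in small $\ttau$---allows a Gronwall estimate forcing $u_1 = u_2$ on each such subinterval.

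The main obstacle is local uniqueness near each $x_i$, where $\phi_{p'}$ degenerates (when $p > 2$) or $\phi_p$ degenerates (when $p < 2$). For $p > 2$, condition $(C_\la)$ as restated in \eqref{RHScondition} guarantees that the right-hand side of \eqref{psi_pm2.eq} is non-zero at $x = x_i$ when $\ttau = 0$, and by $C^0$ continuity it remains bounded away from $0$ on a fixed neighborhood of $x_i$ for $|\ttau|$ small and for any solution $u_j$ that is $C^1$-close to $\Psi_\pm$. Hence $\phi_p(u_j')'$ has constant sign near $x_i$, so $\phi_p(u_j')$ is strictly monotone with a unique zero $\xi_j$; interchanging the roles of $x$ and $\phi_p(u_j')$ as independent variable desingularises the equation and reduces local uniqueness to a standard Lipschitz ODE problem, yielding $u_1 = u_2$ in a neighborhood of $x_i$. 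For $p \le 2$, $\phi_{p'}$ is $C^1$ globally and $(f2)$ with $\rho > 2 - p$ provides enough regularity to make the $(u,v)$-system directly Lipschitz, so no appeal to $(C_\la)$ is required. Patching the local uniqueness statements across the finitely many $x_i$ then yields (a) and completes the proof.
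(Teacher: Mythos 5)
Your overall strategy matches the paper's: uniform a priori $C^1$ bounds and global extension of solutions for every $\ttau$, a compactness (Arzel\`a--Ascoli) argument for continuous dependence that is closed by uniqueness, and a localisation of the uniqueness question around the degenerate points of the equation. The paper, however, does not prove local uniqueness from scratch: it first shows (by a Gronwall estimate integrated backwards from an arbitrary $x_0$, combined with $|u'(0)|=1$) that $|u(x_0)|+|u'(x_0)|>0$ for every $x_0$ when $|\ttau|$ is small, and then invokes the uniqueness theorem of Reichel--Walter \cite[Theorem~4]{RW1} at points where either $u\ne 0$ or $u'\ne 0$. Your attempt to replace that citation by a direct argument is where the gaps lie.

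The serious gap is your treatment of $1<p<2$. You assert that in this range ``$\phi_{p'}$ is $C^1$ globally and $(f2)$ with $\rho>2-p$ provides enough regularity to make the $(u,v)$-system directly Lipschitz.'' This is false: the terms $a_\pm\phi_p(u^\pm)=a_\pm|u^\pm|^{p-1}$ are only H\"older continuous of exponent $p-1<1$ in $u$ near the zeros of $u$, and the rescaled perturbation $\ttau f(|\ttau|^{-1/(p-1)}u)$ has, by $(f2)$, a $u$-derivative that blows up like $|u|^{-\rho}$ there (the role of $\rho>2-p$ is only to make the accompanying power of $|\ttau|$ positive, not to remove the singularity in $u$). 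So a plain Gronwall argument does not close at the zeros of $u$; one must exploit that these zeros are simple (i.e.\ $u'\ne 0$ there, which is exactly the non-simultaneous-vanishing fact) and run a transversal-crossing uniqueness argument of the Reichel--Walter type. A second, lesser, issue is your desingularisation for $p>2$: after interchanging $x$ and $v=\phi_p(u')$ as independent variable, $x$ becomes a dependent variable and the new right-hand sides $1/g$ and $\phi_{p'}(v)/g$ must be Lipschitz in $x$; since $a_\pm$ are only assumed continuous, $g$ need not be Lipschitz in $x$, so ``a standard Lipschitz ODE problem'' is not what you obtain. Both difficulties are repaired by the route the paper takes (or by a contraction argument on the integral equation $u(x)=u_0+\int_{x_0}^x\phi_{p'}\bigl(\phi_p(u_0')+\int_{x_0}^s g\bigr)\,\dif s$, which needs only continuity in $x$), but as written your local uniqueness step does not go through. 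Finally, note that statement $(b)$ requires continuity at every $\ttau_\infty\in(-\tde,\tde)$, not only at $\ttau=0$; your compactness-plus-uniqueness argument does extend, but you should say so.
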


The proof of Proposition~\ref{ivp_props_tpsi.prop} will be postponed
to Appendix~\ref{app_uni.sec} to avoid disrupting the main argument
here.

\medskip
We will now show that the mappings
$\ttau \to \tpsi_{\pm}(\ttau) : (-\tde,\tde) \to C^1[0,1]$
found in Proposition~\ref{ivp_props_tpsi.prop}
are differentiable with respect to $\ttau$, at $\ttau = 0$.
These derivatives will be denoted by $\tpsi_{\pm,\ttau}^0$.
For any $u\in C^1[0,1]$, we  define the functions
$$
f_{\sgn u}(x) :=
\begin{cases}
f_{\sgn (u(x))}(x),  & \text{if $u(x) \ne 0$,}
\\
0 ,  & \text{if $u(x) = 0$}
\end{cases}
$$
(recall that the functions $f_\pm$ were defined in \eqref{fpm.eq}),
and we let $\chi_{u}^\pm$ denote the characteristic function of the
set $\{x \in [0,1] : u^\pm (x) > 0\}$.

\begin{prop}\label{deriv_psi0.prop}
For each $\nu \in \{\pm\}$,
the derivative $\tpsi_{\nu,\ttau}^0$ exists in $C^1[0,1]$
and satisfies the linear initial value problem
\begin{gather}
-(p-1) \left[ \big( |\Psi_\nu'|^{p-2}(\tpsi_{\nu,\ttau}^0)'\big)'
 + \big(a_+ |\Psi_\nu^+|^{p-2} \chi_{\Psi_\nu}^+
 - a_- |\Psi_\nu^-|^{p-2} \chi_{\Psi_\nu}^- \big) \tpsi_{\nu,\ttau}^0
\right]
  = f_{\sgn \Psi_\nu}, \label{psi_pm_dtau.eq}
\\
\tpsi_{\nu,\ttau}^0(0) = 0 , \quad  (\tpsi_{\nu,\ttau}^0)'(0) = 0 .
  \label{psi_pm_dtau_bc.eq}
\end{gather}
Furthermore, the coefficients in
\eqref{psi_pm_dtau.eq} satisfy the standard assumptions
on linear Sturm-Liouville problems, that is,
$$
1/|\Psi_\nu'|^{p-2} \in L^1(0,1) \quad  \text{and}  \quad
|\Psi_\nu^\pm|^{p-2} \chi_{\Psi_\nu}^\pm  \in L^1(0,1).
$$
\end{prop}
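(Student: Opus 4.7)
The plan is to (i) verify the integrability claims at the end of the proposition, (ii) invoke standard linear Sturm-Liouville theory to obtain a unique $C^1$ solution $v$ of \eqref{psi_pm_dtau.eq}-\eqref{psi_pm_dtau_bc.eq}, and (iii) show that $\big(\tpsi_\nu(\ttau)-\Psi_\nu\big)/\ttau \to v$ in $C^1[0,1]$ as $\ttau \to 0$. The form of the candidate linear equation is obtained by formally differentiating the identity $\phi_p(u')' + a_+\phi_p(u^+) - a_-\phi_p(u^-) + \ttau f(|\ttau|^{-1/(p-1)} u) = 0$ in $\ttau$ at $\ttau = 0$, $u = \Psi_\nu$: the chain rule produces the term $(p-1)\big(|\Psi_\nu'|^{p-2} v'\big)'$ from $\phi_p(u')'$; the term $(p-1)\big(a_+|\Psi_\nu^+|^{p-2}\chi_{\Psi_\nu}^+ - a_-|\Psi_\nu^-|^{p-2}\chi_{\Psi_\nu}^-\big) v$ from the jumping terms (on each of the open sets $\{\Psi_\nu > 0\}$ and $\{\Psi_\nu < 0\}$ the nonlinearity is smooth, so pointwise differentiation is legitimate); and the limit $f_{\sgn \Psi_\nu}$ from the $f$-term, via $(f1)$ and the fact that $|\ttau|^{-1/(p-1)}\Psi_\nu(x) \to \pm\infty$ wherever $\Psi_\nu(x) \ne 0$.

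For the integrability, Remark~\ref{wellposedness.rem} gives that $\Psi_\nu$ has only finitely many, simple zeros in $[0,1]$. Near a zero $x_0$ of $\Psi_\nu$ one has $|\Psi_\nu^\pm|^{p-2}\chi_{\Psi_\nu}^\pm \sim |\Psi_\nu'(x_0)|^{p-2}|x-x_0|^{p-2}$ on the appropriate side, which is integrable since $p > 1$. For $1/|\Psi_\nu'|^{p-2}$: if $p \le 2$ it is uniformly bounded; if $p > 2$ then at any zero $x_0$ of $\Psi_\nu'$ we have $\Psi_\nu(x_0) \ne 0$, and condition $(C_\la)$ combined with \eqref{Phi_la_pm.eq} gives $\phi_p(\Psi_\nu')'(x_0) \ne 0$, hence $|\Psi_\nu'(x)| \sim c|x-x_0|^{1/(p-1)}$ and $1/|\Psi_\nu'|^{p-2} \sim |x-x_0|^{-(p-2)/(p-1)}$, which is integrable because $(p-2)/(p-1) < 1$. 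Standard linear Sturm-Liouville theory then yields a unique solution $v \in C^1[0,1]$ of \eqref{psi_pm_dtau.eq}-\eqref{psi_pm_dtau_bc.eq}.

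To establish that $v$ is indeed the $\ttau$-derivative of $\tpsi_\nu(\ttau)$ at $\ttau = 0$, I would recast both the nonlinear and linear problems as first-order systems in $(u, U) := (u, \phi_p(u'))$ and compare them to the reference system for $(\Psi_\nu, \phi_p(\Psi_\nu'))$. Proposition~\ref{ivp_props_tpsi.prop}(b) gives $\tpsi_\nu(\ttau) \to \Psi_\nu$ in $C^1[0,1]$, so divided differences of $\phi_p$ and $\phi_p^{-1}$ along the path of solutions converge pointwise a.e.\ to their derivatives. Dominated convergence, using the uniform bound \eqref{fint.eq} and the a.e.\ pointwise limit $f(|\ttau|^{-1/(p-1)}\tpsi_\nu(\ttau)(x)) \to f_{\sgn \Psi_\nu}(x)$, transports these limits into the integral formulations. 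A Gronwall-type estimate applied to the resulting integral equation for $w_\ttau := (\tpsi_\nu(\ttau)-\Psi_\nu)/\ttau - v$, using the $L^1$-bounds above, closes the argument in $C^1[0,1]$.

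The principal difficulty is the degeneracy of $\phi_p$ and its inverse near the zeros of $\Psi_\nu$ and $\Psi_\nu'$, where the divided differences need not converge uniformly. Condition $(C_\la)$ supplies the transversality of $\Psi_\nu'$ at its zeros in the case $p > 2$, while the assumption $\rho > 2 - p$ in $(f2)$ provides the compensating decay of $f_\xi$ in the case $p \le 2$; both are used precisely to obtain the $L^1$-dominations needed in the limit-passage. Quantifying these bounds sharply enough for the Gronwall step to close will be the technical heart of the argument.
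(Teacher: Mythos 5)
Your proposal follows essentially the same route as the paper's Appendix~B: rewrite \eqref{psi_pm2.eq}-\eqref{psi_pm_bc2.eq} as a first-order system in $(u,\phi_p(u'))$ with the parameter absorbed into the data, form the difference quotient, pass divided differences of the nonlinearities to the limit in $L^1(0,1)$ (Hartman-style, using $(C_\la)$ for the $|\Psi_\nu'|^{2-p}$ entry when $p>2$ and $\rho>2-p$ from $(f2)$ for the $f_\xi$ contribution), and close with Gronwall. The only cosmetic difference is that you verify the $L^1$ integrability of the coefficients by direct local asymptotics at the (simple) zeros of $\Psi_\nu$ and $\Psi_\nu'$, where the paper instead cites \cite[Lemma~2.1]{BR} and \cite[Theorem~2.2]{RYN10}; your asymptotic exponents are correct.
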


The proof of Proposition~\ref{deriv_psi0.prop} is somewhat lengthy, so
we give it in Appendix~\ref{app_diff.sec}.

\medskip

Using the above differentiability results we can now prove the
following lemma.

\begin{lemma} \label{sign_tau_small.lem}
If $\ttau > 0$ is sufficiently small then
\begin{equation}  \label{tpsi_pm_neg.eq}
\tpsi_+(\ttau)(1) \,\, \tpsi_-(\ttau)(1)  < 0.
\end{equation}
\end{lemma}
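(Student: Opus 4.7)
The plan is to Taylor-expand $\tpsi_\nu(\ttau)(1)$ in $\ttau$ using Proposition~\ref{deriv_psi0.prop} and read off the sign of $\tpsi_+(\ttau)(1)\tpsi_-(\ttau)(1)$ for $\ttau > 0$ small. Since we have normalised $\la = 0$, Theorem~\ref{heval.thm} and \eqref{la_in_Si_iff.eq} say that $\Psi_\nu(1) = 0$ precisely when $0 = \la_{k,\nu}$, in which case $\Psi_\nu = u_{k,\nu}$. As $\la \in \{\la_{k,\pm}\}$, we therefore have $\Psi_\nu(1) = 0$ for both $\nu$ in case~(A), and for exactly one $\nu = \nu_0$ (characterised by $\la = \la_{k,\nu_0}$) in case~(B). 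In either case, Proposition~\ref{deriv_psi0.prop} together with \eqref{psi_eq_Phiz.eq} gives the expansion
$$\tpsi_\nu(\ttau)(1) = \Psi_\nu(1) + \ttau\,\tpsi_{\nu,\ttau}^0(1) + o(\ttau).$$

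The central computation is an integral representation of $\tpsi_{\nu,\ttau}^0(1)$ valid whenever $\Psi_\nu(1) = 0$. I would multiply \eqref{psi_pm_dtau.eq} by $\Psi_\nu$, integrate over $(0,1)$, and integrate by parts twice. The $L^1$ bounds stated in Proposition~\ref{deriv_psi0.prop} legitimise these manipulations despite the possibly singular weights $|\Psi_\nu'|^{p-2}$ and $|\Psi_\nu^\pm|^{p-2}\chi_{\Psi_\nu}^\pm$. The boundary terms at $x = 0$ vanish because $\Psi_\nu(0) = \tpsi_{\nu,\ttau}^0(0) = 0$, and the boundary term at $x = 1$ containing $\tpsi_{\nu,\ttau}^0$ vanishes because $\Psi_\nu(1) = 0$, leaving only the term $(p-1)\phi_p(\Psi_\nu'(1))\tpsi_{\nu,\ttau}^0(1)$. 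The interior terms collapse exactly because $\Psi_\nu$ itself lies in the kernel of the linear operator in \eqref{psi_pm_dtau.eq} (which follows by differentiating the one-parameter family $s\Psi_\nu$ at $s = 1$ in \eqref{Phi_la_pm.eq}). Using $f_{\sgn\Psi_\nu}\Psi_\nu = f_+\Psi_\nu^+ - f_-\Psi_\nu^-$, one arrives at
$$(p-1)\,\phi_p\bigl(\Psi_\nu'(1)\bigr)\,\tpsi_{\nu,\ttau}^0(1) = \int_0^1 \bigl(f_+\,u_{k,\nu}^+ - f_-\,u_{k,\nu}^-\bigr).$$

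In case~(A) both $\Psi_\pm(1) = 0$, so $\tpsi_+(\ttau)(1)\tpsi_-(\ttau)(1) = \ttau^2\,\tpsi_{+,\ttau}^0(1)\tpsi_{-,\ttau}^0(1) + o(\ttau^2)$. Since $u_{k,\pm} \in S_{k,\pm}$ and $u_{k,\pm}(1) = 0$, the $k$ interior zeros plus the two endpoint zeros of $u_{k,\pm}$ are all simple, and the derivative alternates in sign at consecutive zeros; together with $u_{k,+}'(0) > 0 > u_{k,-}'(0)$ this yields $\Psi_+'(1)\Psi_-'(1) < 0$, hence $\phi_p(\Psi_+'(1))\phi_p(\Psi_-'(1)) < 0$. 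Combining this with hypothesis~\eqref{u_k_min_u_k_max.eq} and the integral formula forces $\tpsi_{+,\ttau}^0(1)\tpsi_{-,\ttau}^0(1) < 0$, and \eqref{tpsi_pm_neg.eq} follows. In case~(B) the leading order is $\ttau\,\Psi_{-\nu_0}(1)\,\tpsi_{\nu_0,\ttau}^0(1)$; Lemma~\ref{Psi_Psid_sign.lem} fixes the sign of $\Psi_{-\nu_0}(1)\phi_p(\Psi_{\nu_0}'(1))$ (positive if $\la = \la_{k,\min}$, negative if $\la = \la_{k,\max}$), and pairing this against \eqref{u_k_min.eq} or \eqref{u_k_max.eq} via the integral formula yields \eqref{tpsi_pm_neg.eq}.

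The main delicate point is the rigorous justification of the integration by parts and the identification of $\Psi_\nu$ as an element of the kernel of the linear operator in \eqref{psi_pm_dtau.eq}, given the singular weights involved. The integrability bounds recorded in Proposition~\ref{deriv_psi0.prop} (which, for $p > 2$, themselves rely on condition~$(C_\la)$) are exactly what is needed to render these steps rigorous near points where $\Psi_\nu$ or $\Psi_\nu'$ vanishes.
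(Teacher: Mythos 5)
Your proposal is correct and follows essentially the same route as the paper: differentiate $\tpsi_{\pm}(\ttau)(1)$ at $\ttau=0$ via Proposition~\ref{deriv_psi0.prop}, derive the identity $(p-1)\phi_p(\Psi_\nu'(1))\,\tpsi_{\nu,\ttau}^0(1)=\int_0^1\bigl(f_+u_{k,\nu}^+-f_-u_{k,\nu}^-\bigr)$ by multiplying \eqref{psi_pm_dtau.eq} by the half-eigenfunction and integrating by parts, then conclude in case (A) from the sign of $\Psi_+'(1)\Psi_-'(1)$ together with \eqref{u_k_min_u_k_max.eq}, and in case (B) from continuity of the non-degenerate component together with Lemma~\ref{Psi_Psid_sign.lem} and \eqref{u_k_min.eq}--\eqref{u_k_max.eq}. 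Your explicit justification that $\Psi_+'(1)\Psi_-'(1)<0$ in case (A) is a detail the paper leaves implicit, but the argument is the same.
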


\begin{proof}
{\bf Case (A).}
In this case we  have  $\la=\la_{k,+}=\la_{k,-}=0$,
and $u_{k,\pm} = \Psi_{\pm}$, so
\begin{gather}
-\D_p(u_{k,\pm}) - a_+ \phi_p(u_{k,\pm}^+) + a_- \phi_p(u_{k,\pm}^-)  =
0,
  \label{u_pm.eq}
\\
u_{k,\pm}(0) = 0, \quad  u_{k,\pm}'(0) = \pm 1 , \quad  u_{k,\pm}(1) = 0
.
  \label{u_pm_bc.eq}
\end{gather}
By \eqref{psi_eq_Phiz.eq} and the final condition in \eqref{u_pm_bc.eq}
we have $\tpsi_{\pm}(0)(1)=0$,
so we need only show that
\begin{equation}  \label{tpsi_ttau_pm_neg.eq}
\tpsi_{-,\ttau}^0(1) \, \tpsi_{+,\ttau}^0(1)<0.
\end{equation}
Multiplying \eqref{psi_pm_dtau.eq} by $u_{k,\pm}$,
integrating by parts, and using
\eqref{psi_pm_dtau_bc.eq}-\eqref{u_pm_bc.eq}
yields
$$
(p-1)|u_{k,\pm}'(1)|^{p-2} u_{k,\pm}'(1) \tpsi_{\pm,\ttau}^0(1)
=
\int_0^1  f_{\sgn u_{k,\pm}} u_{k,\pm}
$$
\begin{equation}\label{bterms.eq}
= \int_0^1 \big( f_{+} u_{k,\pm}^+ - f_{-} u_{k,\pm}^-  \big)  .
\end{equation}
It now follows from \eqref{u_k_min_u_k_max.eq} and \eqref{bterms.eq}
that
$$
u_{k,-}'(1) u_{k,+}'(1) \tpsi_{-,\ttau}^0(1) \tpsi_{+,\ttau}^0(1) > 0,
$$
and hence  \eqref{tpsi_ttau_pm_neg.eq} holds, which completes
the proof of  Lemma~\ref{sign_tau_small.lem} in case (A).

\medskip

\noindent
{\bf Case (B).} We will deal with the case (B1), assuming that
\begin{equation}  \label{B1_supp.eq}
\la_{k,\min} = \la_{k,+} = 0 < \la_{k,-},
\end{equation}
so that, by definition, $u_{k,\min} = u_{k,+} = \Psi_{+}$.
The other cases are similar.
We now observe that, by \eqref{psi_eq_Phiz.eq},
$\tpsi_{-}(0)(1) = \Psi_{-}(1) \ne 0$
so by continuity, for $\ttau > 0$ sufficiently small,
\begin{equation} \label{psim_Phiz.eq}
\tpsi_{-}(\ttau)(1) \, \Psi_{-}(1)  > 0 ,
\end{equation}
that is, $\tpsi_{-}(\ttau)(1)$ has the same sign as $\Psi_{-}(1)$.
On the other hand, by \eqref{la_in_Si_iff.eq} and
\eqref{psi_eq_Phiz.eq},
$\tpsi_{+}(0)(1) =  0$,
so to obtain a similar result to \eqref{psim_Phiz.eq}
for the sign of $\tpsi_{+}(\ttau)(1)$, for $\ttau$ small,
we will consider the sign of the derivative
$\tpsi_{+,\ttau}^0$.

In this case the argument in the proof of case (A) now shows that
$$
(p-1)|u_{k,\min}'(1)|^{p-2} u_{k,\min}'(1) \tpsi_{+,\ttau}^0(1)
= \int_0^1 f_{\sgn u_{k,\min}}  u_{k,\min}
$$
\begin{equation}\label{bterms2.eq}
= \int_0^1 \big( f_{+} u_{k,\min}^+ -
 f_{-} u_{k,\min}^-  \big) < 0 ,
\end{equation}
by \eqref{u_k_min.eq},
that is, $\tpsi_{+,\ttau}^0(1) u_{k,\min}'(1) < 0$.
Also, it follows from  \eqref{B1_supp.eq} and
Lemma~\ref{Psi_Psid_sign.lem} that
\begin{equation}  \label{u_min_phi.eq}
u_{k,\min}'(1) \Psi_{-}(1)  = \Psi_{+}'(1) \Psi_{-}(1) > 0 .
\end{equation}
Hence, by \eqref{bterms2.eq} and \eqref{u_min_phi.eq},
$\tpsi_{+,\ttau}^0(1) \, \Psi_{-}(1)  < 0 ,$
and so, for $\ttau > 0$ sufficiently small,
\begin{equation}  \label{psip_Phiz.eq}
\tpsi_{+}(\ttau)(1) \, \Psi_{-}(1)  < 0 .
\end{equation}
Combining \eqref{psim_Phiz.eq} and \eqref{psip_Phiz.eq} completes the
proof of Lemma~\ref{sign_tau_small.lem}.
\end{proof}

Lemma~\ref{sign_tau_small.lem}, together with
\eqref{psi_tau_scaling.eq}, now yields the the following result.

\begin{prop} \label{sign_tau_large.prop}
There exists  $\tau_0 > 0$ such that the initial value
problems
\eqref{tpsi_pm.eq}-\eqref{tpsi_pm_ic.eq},
with $\tau = \pm \tau_0$,
have unique solutions $\psi(\pm \tau_0)$,
satisfying \eqref{psi_tau_large.eq}.
\end{prop}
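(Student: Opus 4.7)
The plan is to deduce Proposition~\ref{sign_tau_large.prop} essentially as a corollary of Lemma~\ref{sign_tau_small.lem}, Proposition~\ref{ivp_props_tpsi.prop} and the scaling identity \eqref{psi_tau_scaling.eq}. No new analytic input should be required; the only content is the bookkeeping needed to transfer properties of the rescaled problem \eqref{psi_pm2.eq}-\eqref{psi_pm_bc2.eq} for small $\ttau>0$ to the original problem \eqref{tpsi_pm.eq}-\eqref{tpsi_pm_ic.eq} for large $\tau>0$.

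First I would choose $\ttau_0 > 0$ small enough that simultaneously $\ttau_0 < \tde$ (so that Proposition~\ref{ivp_props_tpsi.prop} provides unique solutions $\tpsi_\pm(\ttau_0) \in D_p$ of \eqref{psi_pm2.eq}-\eqref{psi_pm_bc2.eq} with $\ttau=\ttau_0$) and $\ttau_0$ satisfies the smallness condition of Lemma~\ref{sign_tau_small.lem} (so that $\tpsi_+(\ttau_0)(1)\,\tpsi_-(\ttau_0)(1) < 0$). Then I would set $\tau_0 := \ttau_0^{-1} > 0$, which is the value of $\tau$ corresponding to $\ttau_0$ under the scaling $\ttau = |\tau|^{-1}$.

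The next step is to verify uniqueness of $\psi(\pm\tau_0)$. By the scaling correspondence stated around \eqref{psi_tau_scaling.eq}, any solution of \eqref{tpsi_pm.eq}-\eqref{tpsi_pm_ic.eq} with $\tau = \pm \tau_0$ is of the form $u(x) = \tau_0^{1/(p-1)}\, v(\ttau_0\, \cdot)(x)$ for some solution $v$ of \eqref{psi_pm2.eq}-\eqref{psi_pm_bc2.eq} with $\ttau = \ttau_0$ and initial slope $\pm 1$, and conversely. Since the latter solution is unique by our choice of $\ttau_0$, so is $\psi(\pm\tau_0)$; explicitly,
\begin{equation*}
\psi(\pm\tau_0) = \tau_0^{1/(p-1)}\, \tpsi_{\pm}(\ttau_0).
\end{equation*}

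Finally, evaluating at $x=1$ and multiplying gives
\begin{equation*}
\psi(\tau_0)(1)\,\psi(-\tau_0)(1) = \tau_0^{2/(p-1)}\, \tpsi_+(\ttau_0)(1)\, \tpsi_-(\ttau_0)(1),
\end{equation*}
and since $\tau_0^{2/(p-1)} > 0$ while the remaining factor is negative by Lemma~\ref{sign_tau_small.lem}, the inequality \eqref{psi_tau_large.eq} follows immediately. There is no real obstacle in this argument: the substantive work has already been done in establishing the rescaled Proposition~\ref{ivp_props_tpsi.prop} and Lemma~\ref{sign_tau_small.lem}, and the only thing to check carefully is that the scaling \eqref{psi_tau_scaling.eq} sets up a genuine bijection between solution sets, which is immediate from the homogeneity of $\phi_p$ and the linear change of variable $x\mapsto \ttau_0 x$.
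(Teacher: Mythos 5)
Your argument is correct and is essentially identical to the paper's: the paper gives no separate proof of Proposition~\ref{sign_tau_large.prop}, stating only that it follows from Lemma~\ref{sign_tau_small.lem} together with the scaling identity \eqref{psi_tau_scaling.eq}, which is exactly the bookkeeping you carry out (choose $\ttau_0<\tde$ small enough for the lemma, set $\tau_0=\ttau_0^{-1}$, transfer uniqueness and the sign of the product at $x=1$ through the bijection of solution sets). One small correction: the scaling \eqref{psi_tau_scaling.eq} involves no change of the independent variable $x\mapsto\ttau_0 x$ --- it rescales only the amplitude of $u$ (by $|\tau|^{1/(p-1)}$) and replaces the parameter $\tau$ by $|\tau|^{-1}$ --- but your displayed formulas $\psi(\pm\tau_0)=\tau_0^{1/(p-1)}\tpsi_{\pm}(\ttau_0)$ and the evaluation at $x=1$ are consistent with this, so the computation is unaffected.
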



\subsection{Connectedness of the set of solution values}

If the initial value problem
\eqref{tpsi_pm.eq}-\eqref{tpsi_pm_ic.eq}
had a unique solution $\psi(\tau)$ for all $\tau \in [-\tau_0,\tau_0]$,
with $\psi(\tau)(1)$ a continuous function of $\tau$,
then we could immediately conclude from
Proposition~\ref{sign_tau_large.prop}
that \eqref{psi_pm_zero.eq} has a solution,
which would then complete the proof of Theorem~\ref{landesman.thm}.
Unfortunately, uniqueness of solutions of
\eqref{tpsi_pm.eq}-\eqref{tpsi_pm_ic.eq} may fail
for `small' $\tau$, so we need to deal with this possibility.

For any $\tau \in \R$, let $S_\tau$ denote the set of all
solutions $\psi \in D_p$ of
\eqref{tpsi_pm.eq}-\eqref{tpsi_pm_ic.eq}
(we note that, by the proof of Proposition~\ref{ivp_props_tpsi.prop},
any local solution of
\eqref{tpsi_pm.eq}-\eqref{tpsi_pm_ic.eq}
extends to the interval $[0,1]$).

\begin{prop} \label{connectedness.prop}
The set of solution values at $x=1$,
$$
V(1) :=
\bigcup_{\tau \in [-\tau_0,\tau_0]} \bigcup_{\psi \in S_{\tau}}
\psi(1),
$$
is connected.
\end{prop}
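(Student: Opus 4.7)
My strategy is to reformulate \eqref{tpsi_pm.eq}--\eqref{tpsi_pm_ic.eq} as a continuous planar first-order system and then apply a Hukuhara--Kneser-type theorem to the family of initial-value problems parametrised by $\tau\in[-\tau_0,\tau_0]$.

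Setting $v := \phi_p(\psi')$ and $q := p/(p-1)$ (so that $\phi_q = \phi_p^{-1}$ is continuous on $\R$), the problem becomes
\[
\psi' = \phi_q(v), \qquad v' = -a_+(x)\phi_p(\psi^+) + a_-(x)\phi_p(\psi^-) - f(x,\psi),
\]
with initial condition $(\psi(0),v(0)) = (0,\tau)$; indeed $\phi_p(|\tau|^{1/(p-1)}\sgn\tau)=\tau$, so \eqref{tpsi_pm_ic.eq} is correctly encoded. The right-hand side is continuous on $[0,1]\X\R^2$, and standard a priori bounds (using boundedness of $a_\pm$ and the uniform bound \eqref{fint.eq} on $f$), combined with the extension remark made right after the definition of $S_\tau$, ensure that every local solution prolongs to $[0,1]$, uniformly in $\tau\in[-\tau_0,\tau_0]$.

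The set of initial data $K := \{(0,\tau) : \tau\in[-\tau_0,\tau_0]\}$ is a compact connected segment in $\R^2$. I would invoke the following generalisation of Kneser's theorem: for a continuous planar IVP with initial data ranging over a compact connected set $K$ and with all solutions extending to $[0,1]$, the attainable set
\[
A := \{(\psi(1),v(1)) : (\psi,v)\text{ solves the system with IC in } K\}
\]
is a continuum in $\R^2$. The argument combines (i) the classical Kneser theorem, which yields that the attainable set from any \emph{single} IC is a continuum, with (ii) upper semicontinuity of the IC-to-solution-set multifunction, itself a standard Arzel\`a--Ascoli compactness argument made possible by the uniform $[0,1]$-extendibility from the previous paragraph. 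Concretely, a putative disconnection $A = U\sqcup W$ would force each single-IC attainable set, being connected, to lie entirely in $U$ or in $W$; by upper semicontinuity the two preimages would be relatively open in $K$, contradicting its connectedness.

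Connectedness of $V(1)$ then follows, since $V(1)$ is the continuous image of the connected set $A$ under the projection $(y,w)\mapsto y$. The main obstacle is step (ii) --- establishing upper semicontinuity of the solution-set multifunction at those $\tau$ near $0$ where solutions of \eqref{tpsi_pm.eq}--\eqref{tpsi_pm_ic.eq} fail to be unique --- but this reduces to a standard Peano-compactness argument once the uniform $[0,1]$-extendibility is in hand.
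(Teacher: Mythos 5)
Your proposal is correct and follows essentially the same route as the paper: the paper likewise obtains connectedness of each single-$\tau$ attainable set $V_\tau(1)$ from a Kneser-type theorem (citing \cite[Proposition~13.9]{ZEI} for first-order IVPs), and then rules out a disconnection of $V(1)$ by the upper semicontinuity of $\tau \mapsto S_\tau$ coming from the Arzel\`a--Ascoli compactness argument in the proof of Proposition~\ref{ivp_props_tpsi.prop}~$(b)$. Your phrasing via relatively open preimages in $K$ and the paper's phrasing via a common limit point $\tau^*$ of the two sets $T_1$, $T_2$ are equivalent uses of the connectedness of the parameter interval, and your extra step of projecting the planar attainable set onto its first coordinate is a cosmetic difference only.
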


\begin{proof}
If $V(1)$ is not connected then there exist compact, non-empty sets
$M_1$, $M_2$ such that
$$
V(1) = M_1 \cup M_2, \quad M_1 \cap M_2 = \emptyset.
$$
Adapting the proof of
\cite[Proposition~13.9]{ZEI} (which deals with a similar connectedness
result for a first order initial value problem),
one can show that, for each fixed $\tau \in [-\tau_0,\tau_0]$, the set
$$
V_\tau(1) := \bigcup_{\psi \in S_{\tau}} \psi(1)
$$
is connected.
Hence, $V_\tau(1) \subset M_i$, for some $i = 1,\,2$,
and we can define the non-empty sets
$$
T_i := \{ \tau \in [0,1]: V_\tau(1) \subset M_i \} ,
\quad i = 1,\,2 .
$$
Clearly, there exists a point $\tau^* \in [0,1]$ which is a limit point
of each of the sets $T_1,\,T_2$.
However, by a similar argument to the proof of
Proposition~\ref{ivp_props_tpsi.prop}~$(b)$, if $(\tau_n)$ is
a sequence with $\tau_n \to \tau^*$, and $(\psi_{\tau_n}(1))$ is a
corresponding sequence of solution values then
(after taking a subsequence if necessary)
$\psi_{\tau_n}(1) \to \psi_{\tau^*}(1)$
for some $\psi_{\tau^*} \in S_{\tau^*}$.
We deduce that  $V_{\tau^*}(1) \cap M_i \ne \emptyset$,
$i = 1,\,2$,
which contradicts the connectedness of the set $V_{\tau^*}(1)$.
\end{proof}

\subsection{Conclusion of the proof of Theorem~\ref{landesman.thm}}
It now follows immediately from
Propositions~\ref{sign_tau_large.prop}
and~\ref{connectedness.prop}
that equation \eqref{psi_pm_zero.eq} has a solution $\tau$,
which completes the proof of Theorem~\ref{landesman.thm}.
\hfill$\Box$

\appendix

\section{Proof of
Proposition~\ref{ivp_props_tpsi.prop}}\label{app_uni.sec}

\noindent $(a)$
By \cite[Theorem~1]{RW1}, for any $\ttau \in \R$ the problem
\eqref{psi_pm2.eq}-\eqref{psi_pm_bc2.eq}
has a local solution $u$ (possibly non-unique) on an interval
$I_u \subset [0,1]$ containing $0$
(for now, we will suppose that one or other of the $\pm$ signs in
\eqref{tpsi_pm_ic.eq} has been chosen and is fixed,
so we omit these, and the dependence of $u$ on $\ttau$,
from the notation).
We now define
$$
m(x):=\max\{|u(y)|:0\le y \le x\},
\quad x \in I_u .
$$
By integrating \eqref{psi_pm2.eq}, and
using \eqref{psi_pm_bc2.eq}, we see that
$$
|\phi_p(u'(x))|\le 1 + K_0 |\ttau| + m(x)^{p-1}\int_0^x(|a_+|+|a_-|),
\quad x \in I_u ,
$$
so that
\beq  \label{ud_bnd.eq}
|u'(x)| \le C( M_\ttau + m(x) ),   \quad x \in I_u,
\eeq
where $M_\ttau := 1+|\ttau|^{\frac{1}{p-1}}$
(here, and below, $C$ will denote a positive constant, which may be
different on each occasion but which does not depend on $\ttau$ of $u$).
Now, by a further integration and \eqref{ud_bnd.eq},
$$
m(x) \le C M_\ttau + C \int_0^x m  , \quad x \in I_u,
$$
so by Gronwall's inequality and \eqref{ud_bnd.eq},
\beq  \label{Gronwall_est.eq}
|u(x)| + |u'(x)| \le  C M_\ttau ,  \quad x \in I_u .
\eeq
The Corollary to \cite[Theorem~1]{RW1} now shows that the solution $u$
may
be extended to the whole of the interval $[0,1]$
(a priori, not necessarily in a unique manner).
Hence, from now on we may suppose that any solution $u$ of
\eqref{psi_pm2.eq}-\eqref{psi_pm_bc2.eq}
is defined on $[0,1]$ and belongs to $D_p$.

Next, we show that we can choose a  sufficiently small $\tde>0$ such
that if $|\ttau| \le \tde$ then the solution $u$ is locally unique in a
neighbourhood of any point $x_0 \in[0,1]$.
Suppose that $u(x_0)=u_0, \ u'(x_0)=u'_0.$
By integrating from the point $x=x_0$, and using a similar argument to
the
proof of \eqref{Gronwall_est.eq},
we can show that $u$ must satisfy
$$
|u|_1 \le  C \big(|u_0|+|u'_0|+|\ttau|^{\frac{1}{p-1}} \big)  .
$$
Now, since $|u'(0)|=1$ (by \eqref{psi_pm_bc2.eq}),
it follows immediately that if $\tde$ is sufficiently small and
$|\ttau| \le \tde$ then $|u_0|+|u'_0|>0$,
that is,
\beq  
|u(x_0)| + |u'(x_0)| > 0 , \quad x_0 \in [0,1] .
\eeq
In particular, $u'$ is non-zero at any zero of $u$, and vice versa.
It now follows from our hypotheses on $a_\pm$ and $f$,
together with parts~($\al$)-($ii$), ($\be$)-($iii$) and ($\be$)-($v$)
of \cite[Theorem~4]{RW1}, that the solution $u$ is locally unique near
$x_0$.
Since $x_0$ was arbitrary, this yields global uniqueness on $[0,1]$.
Having proved existence and uniqueness of solutions on $[0,1]$, when
$|\ttau| \le \tde$,
we can now use the notation
$\tpsi_\pm(\ttau) \in D_p$ for these solutions.
\medskip

\noindent $(b)$
We now fix $\nu \in \{\pm\}$ and $\ttau_\infty \in [-\tde,\tde]$,
and consider a sequence $(\ttau_n)$ in $[-\tde,\tde]$ such that
$\ttau_n \to \ttau_\infty$.
It follows from \eqref{Gronwall_est.eq}
and the compactness of the
embedding of $C^1[0,1]$ into  $C^0[0,1]$, that
$\tpsi_\nu(\ttau_n) \to u_{\infty,\nu}$ in $C^0[0,1]$,
for some $u_{\infty,\nu} \in C^0[0,1]$
(after taking a subsequence if necessary).
It then follows from the integral form of
\eqref{psi_pm2.eq}-\eqref{psi_pm_bc2.eq}
that $\tpsi_\nu(\ttau_n) \to u_{\infty,\nu}$ in $C^1[0,1]$,
and that $u_{\infty,\nu}$ is a solution of
\eqref{psi_pm2.eq}-\eqref{psi_pm_bc2.eq}
with $\ttau=\ttau_\infty$.
Hence, by the uniqueness proved in part~$(a)$,
$u_{\infty,\nu} = \tpsi_\nu(\ttau_\infty)$.
We deduce that the mapping $\ttau \to \tpsi_\nu(\ttau)$
is continuous at $\ttau_\infty$.

\section{Proof of Proposition~\ref{deriv_psi0.prop}}
\label{app_diff.sec}

Due to the degeneracy of \eqref{psi_pm2.eq} when either $u=0$ or $u'=0$
(depending on the value of $p$),
standard results regarding the differentiability of solutions
with respect to parameters do not apply immediately.
However, we will essentially follow the proof of
\cite[Theorem~3.1,~p.~95]{HAR}
to establish our result.

We first rewrite the problem
\eqref{psi_pm2.eq}-\eqref{psi_pm_bc2.eq}
as an initial value problem for a first order system, in which
we incorporate the parameter $\ttau$ into the initial data.
Defining $v:=\phi_p(u')$, so that $u'=\phi_{p'}(v)$,
where $p' = p/(p-1)$,
we consider the initial value problem
\begin{equation}  \label{first_order_IVP.eq}
\mat{u \\ v \\ \ttau}' =
\mat{ \phi_{p'}(v) \\ g(u,\ttau) \\ 0} \quad \text{on $(0,1)$} ,
\quad
\mat{ u \\ v \\ \ttau}(0) =
\mat{0 \\ \nu \\ \ttau },
\end{equation}
where
$$
g(x,\xi,\ttau):=
-a_+(x) \phi_p(\xi^+) + a_-(x) \phi_p(\xi^-) -
  \ttau f\big( x,|\ttau|^{-\frac{1}{p-1}}\xi \big),
\quad x \in [0,1], \ \xi ,\, \ttau \in \R.
$$
As before, we regard the final term of $g$ as 0 when $\ttau = 0$,
and we now regard $\nu$ as an element of
$\{\pm 1\}$, rather than of $\{\pm \}$.
Clearly, the problems \eqref{psi_pm2.eq}-\eqref{psi_pm_bc2.eq} and
\eqref{first_order_IVP.eq} are equivalent so,
by Proposition~\ref{ivp_props_tpsi.prop},
when $|\ttau|<\tde$ the problem
\eqref{first_order_IVP.eq} has a unique solution
$\eta(\ttau) \in C^0[0,1]$
(for brevity, in this notation we omit the dependence of $\eta(\ttau)$
on $\nu$, which we regard as fixed for the remainder of the proof;
we also write $C^i[0,1] = C^i([0,1],\R^3)$, $i=0,1,$
which should not cause any confusion).
Proposition~\ref{ivp_props_tpsi.prop} also shows that the mapping
$\ttau \to \eta(\ttau) : (-\tde,\tde) \to C^0[0,1]$
is continuous;
we will now prove that this mapping is also differentiable
at $\ttau=0$,
and that the corresponding derivative,
$\eta_\ttau(0) $, satisfies $\eta_\ttau(0) = z_0$,
where $z_0$ is the (unique) solution of the linear initial
value problem
\begin{equation}  \label{linpb}
z' = J_0 z \quad \text{on $(0,1)$} , \quad
z(0)=
\mat{ 0 \\ 0 \\ 1 },
\end{equation}
with
\begin{equation*}
J_0 :=
\mat{
0 & \frac{1}{p-1}|\Psi_\nu'|^{2-p} & 0 \\
-a_+ |\Psi_\nu^+|^{p-2} \chi_{\Psi_\nu}^+
  +  a_- |\Psi_\nu^-|^{p-2} \chi_{\Psi_\nu}^- & 0 & - f_{\sgn \Psi_\nu}
\\
0 & 0 & 0
}.
\end{equation*}
(We will see in the proof of Lemma~\ref{J_L_cvgnce.lem} below that
each component of $J_0$ lies in $L^1(0,1)$,
so \eqref{linpb} indeed has a unique solution $z_0 \in C^0[0,1]$,
see \cite[Prob.~1,~Ch.~3]{CL}.)

For $0 < |\ttau|<\tde$ we define the difference quotient
$$
Q(\ttau) := \frac{1}{\ttau} (\eta(\ttau)-\eta(0))  \in C^0[0,1] .
$$
This satisfies an initial value problem which we now describe.
We begin by noting that, by the above construction, the first component of
$\eta(\ttau)$, which we denote by $\eta_1(\ttau)$,
in fact lies in $C^1[0,1]$, and depends continuously on $\ttau$ in
$C^1[0,1]$,
with
$\eta_1(0) = \Psi_\nu$.
Now, defining
$$
\ell(t,\ttau) := t \eta_1(\ttau) + (1-t)\Psi_\nu  \in C^1[0,1] ,
\quad t \in [0,1],
$$
and following the proof of \cite[Theorem~3.1,~p.~95]{HAR}
(using the integral form of the mean-value theorem),
we can show that $Q(\ttau)$ satisfies the initial value problem
\begin{equation}\label{zeq}
Q(\ttau)' = J(\ttau) Q(\ttau) \quad \text{on $(0,1)$}, \quad
Q(0) = \mat{ 0 \\ 0 \\ 1 } ,
\end{equation}
where the matrix $J(\ttau)$ has the following entries:
\alns{
J(\ttau)_{12}	&=  \frac{1}{p-1}\int_0^1
  |\ell(t,\ttau)'|^{2-p}  \diff t,
\\
J(\ttau)_{21}	&=  \int_0^1
   g_\xi(\ell(t,\ttau),\ttau)  \diff t ,
\\
J(\ttau)_{23}	&=  \int_0^1
   g_\ttau(\ell(t,\ttau),\ttau)  \diff t ,
}
and the other entries are zero.

\begin{lemma}  \label{J_L_cvgnce.lem}
\beq \label{J_lim.eq}
\lim_{\ttau \to 0} \|J(\ttau) - J_0 \|_1 = 0.
\eeq
\end{lemma}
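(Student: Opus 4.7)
Since $J(\ttau) - J_0$ vanishes outside the three positions $(1,2)$, $(2,1)$, $(2,3)$, my plan is to prove $L^1(0,1)$-convergence of each of these entries separately by dominated convergence. The basic input is Proposition~\ref{ivp_props_tpsi.prop}$(b)$: as $\ttau \to 0$, $\eta_1(\ttau) \to \Psi_\nu$ in $C^1[0,1]$, so $\ell(t,\ttau) \to \Psi_\nu$ and $\ell(t,\ttau)' \to \Psi_\nu'$ uniformly in $t \in [0,1]$.

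For the pointwise limits, Remark~\ref{wellposedness.rem} tells us that the set $Z := \{x \in [0,1] : \Psi_\nu(x)\,\Psi_\nu'(x) = 0\}$ is finite. For $x \in [0,1] \setminus Z$ and $|\ttau|$ sufficiently small, $\ell(t,\ttau)(x)$ has the sign of $\Psi_\nu(x)$ and $\ell(t,\ttau)'(x)$ stays bounded away from $0$ uniformly in $t$; continuity of the relevant power functions away from $0$ then yields pointwise convergence of the $J(\ttau)_{12}$ and $J(\ttau)_{21}$ integrands to the corresponding $(J_0)$ entries. For $J(\ttau)_{23}$ the argument $|\ttau|^{-1/(p-1)}\ell(t,\ttau)(x)$ diverges to $\pm\infty$ with the sign of $\Psi_\nu(x)$, so $(f1)$ gives $f(x, |\ttau|^{-1/(p-1)}\ell(t,\ttau)(x)) \to f_{\sgn \Psi_\nu(x)}(x)$, and the auxiliary $f_\xi$-terms are controlled by the growth bound in $(f2)$, whose threshold $\rho > 2-p$ (when $p \le 2$) is precisely what renders the corresponding power of $|\ttau|$ positive.

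The more delicate task is producing a $\ttau$-uniform $L^1$-majorant for each integrand. I first verify that each entry of $J_0$ lies in $L^1(0,1)$: at a zero $x_0$ of $\Psi_\nu'$, the ODE \eqref{Phi_la_pm.eq} combined with condition $(C_\la)$ (when $p > 2$) forces $\phi_p(\Psi_\nu')$ to cross zero transversally, so $|\Psi_\nu'(x)| \ge c|x-x_0|^{1/(p-1)}$ near $x_0$ and hence $|\Psi_\nu'|^{2-p} \sim |x-x_0|^{(2-p)/(p-1)}$ is integrable, since $(2-p)/(p-1) > -1$ for every $p > 1$; at a simple zero of $\Psi_\nu$ one has $|\Psi_\nu(x)| \sim |x-x_0|$, so $|\Psi_\nu|^{p-2} \sim |x-x_0|^{p-2}$ is integrable since $p-2 > -1$. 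For the family, the ODEs satisfied by $\Psi_\nu$ and $\eta_1(\ttau)$ together with the a priori bound \eqref{Gronwall_est.eq} show that $\phi_p(\ell(t,\ttau)')$ is uniformly Lipschitz in $(t,\ttau)$ for $|\ttau|$ small; combined with the uniform $C^0$-convergence $\ell(t,\ttau)' \to \Psi_\nu'$, this confines each zero of $\ell(t,\ttau)'$ to a shrinking neighbourhood of some zero of $\Psi_\nu'$, with the same transversal slope, producing a single $\ttau$-independent $L^1$-majorant for $|\ell(t,\ttau)'|^{2-p}$. An analogous majorant handles $|\ell(t,\ttau)|^{p-2}$ near the simple zeros of $\Psi_\nu$, while the $(2,3)$ integrand is pointwise dominated by $K_0$ plus an $L^1$-small correction vanishing with $\ttau$.

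I expect the main technical obstacle to be precisely this propagation of transversality: turning the qualitative non-degeneracy granted by $(C_\la)$ at the finitely many zeros of $\Psi_\nu'$ into a quantitative lower bound for the entire convex-combination family $\ell(t,\ttau)'$, uniform in both $t \in [0,1]$ and small $\ttau$, so that a single integrable majorant can close the Lebesgue argument simultaneously across all $t$ and $\ttau$.
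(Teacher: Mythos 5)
Your overall strategy coincides with the paper's: treat the three nonzero entries of $J(\ttau)-J_0$ separately and deduce $L^1$-convergence from pointwise convergence together with integrable majorants. The difference is that the paper delegates the two hard $L^1$-continuity statements to external results --- \cite[Lemma~2.1]{BR} for $\ttau\mapsto|\ell(t,\ttau)^\pm|^{p-2}\chi_{\ell(t,\ttau)}^\pm$ (using only that $\Psi_\nu$ has simple zeros) and \cite[Theorem~2.2]{RYN10} for $\ttau\mapsto|\ell(t,\ttau)'|^{2-p}$ (using $(C_\la)$) --- whereas you attempt to prove them from scratch. Your treatment of the $(2,1)$ entry and of the integrability of the entries of $J_0$ is sound: a simple zero is a first-order, hence convex-combination-stable, condition, and the exponent checks $p-2>-1$ and $(2-p)/(p-1)>-1$ are correct.

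The gap sits in the $(1,2)$ entry, precisely at the step you flag as the main obstacle. The transversality supplied by $(C_\la)$ is a statement about $\phi_p(\Psi_\nu')$, and $\phi_p$ is nonlinear, so it does \emph{not} pass to the convex combination $\ell(t,\ttau)'=t\,\eta_1(\ttau)'+(1-t)\Psi_\nu'$. Concretely, near a zero $x_0$ of $\Psi_\nu'$ the zero $x_1(\ttau)$ of $\eta_1(\ttau)'$ will in general differ from $x_0$; for $t\in(0,1)$ the combination then vanishes at some $x_\ast(t,\ttau)$ strictly between them, where both constituents are nonzero with opposite signs and hence $C^1$. There $\ell(t,\ttau)'$ vanishes only \emph{linearly} (its derivative $\ell(t,\ttau)''(x_\ast)\ne0$), so $\phi_p(\ell(t,\ttau)')$ vanishes to order $p-1$ rather than transversally (its derivative $(p-1)|\ell(t,\ttau)'|^{p-2}\ell(t,\ttau)''$ is zero at $x_\ast$ when $p>2$), and $|\ell(t,\ttau)'|^{2-p}\sim|x-x_\ast|^{2-p}$ --- not your claimed $|x-x_\ast|^{(2-p)/(p-1)}$, and not even locally integrable when $p\ge3$. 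So no $\ttau$-independent majorant of the proposed form exists. The repair is to observe that the integral mean-value theorem applied to the first component of \eqref{first_order_IVP.eq} actually produces the convex combination at the level of $v=\phi_p(u')$, so the relevant integrand is $|t\phi_p(\eta_1(\ttau)')+(1-t)\phi_p(\Psi_\nu')|^{(2-p)/(p-1)}$; since $\phi_p(\eta_1(\ttau)')'=-g(\eta_1(\ttau),\ttau)\to-g(\Psi_\nu,0)\ne0$ uniformly near $x_0$ by $(C_\la)$, \emph{this} combination does cross zero with slope bounded below, uniformly in $(t,\ttau)$, yielding the bound $C|x-x_\ast|^{(2-p)/(p-1)}$. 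Even then, because $x_\ast$ moves with $(t,\ttau)$, there is no single dominating function; one should instead use the uniform smallness $\int_{|x-x_0|<\de}|x-x_\ast|^{(2-p)/(p-1)}\diff x\le C\de^{1/(p-1)}$ and a Vitali-type (uniform integrability) argument. A secondary point: bounding the $f_\xi$-term of the $(2,3)$ entry via $(f2)$ produces the prefactor $|\ttau|^{-(1-\rho)/(p-1)}$, a \emph{negative} power of $|\ttau|$; the threshold $\rho>2-p$ renders the power positive only in the $(2,1)$ entry, where the term carries the additional factor $\ttau|\ttau|^{-1/(p-1)}$, so the $(2,3)$ entry needs a separate justification.
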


\begin{remark}
In \eqref{J_lim.eq} we define the norm, $|A|_M$, of a matrix $A$
 to be the sum over the entries $\sum_{ij} |a_{ij}|$,
and we define the $L^1$ norm, $\|A\|_1$, of a matrix function $A(\cdot)$ to
be the $L^1$ norm of the matrix norm, viz.,
$\|A\|_1 :=\| |A(\cdot)|_M \|_1$.
\end{remark}

\begin{proof}
We consider the components of $J(\ttau)$ separately.

\noindent \underline{$J(\ttau)_{21}$} \
By definition,
\alns{
g_\xi(\ell(t,\ttau),\ttau)
=&
-a_+|\ell(t,\ttau)^+|^{p-2}\chi_{\ell(t,\ttau)}^+
+
a_-|\ell(t,\ttau)^-|^{p-2}\chi_{\ell(t,\ttau)}^-
\\&
-\ttau|\ttau|^{-\frac{1}{p-1}} f_\xi(|\ttau|^{-\frac{1}{p-1}}
   \ell(t,\ttau))
.
}
For each $t \in [0,1]$, the  mapping
$\ttau \to \ell(t,\ttau)$
is continuous into $C^1[0,1]$ at $\ttau = 0$,
with
$\eta_1(0) = \Psi_\nu$.
Hence, since the function $\Psi_\nu$ has only simple zeros in $[0,1]$,
it follows from \cite[Lemma~2.1]{BR} that the mappings
$\ttau \to |\ell(t,\ttau)^\pm|^{p-2}\chi_{\ell(t,\ttau)}^\pm$
are continuous into $L^1(0,1)$ at $\ttau = 0$,
with
$|\ell(t,0)^\pm|^{p-2}\chi_{\ell(t,0)}^\pm =
  |\Psi_\nu^\pm|^{p-2} \chi_{\Psi_\nu}^\pm$.
Also, we can write the final term on the RHS in the form
$$
-\ttau|\ttau|^{-\frac{1}{p-1}}
\big( |\ttau|^{-\frac{1}{p-1}} \ell(t,\ttau) \big)^{-\rho}
\big( |\ttau|^{-\frac{1}{p-1}} \ell(t,\ttau) \big)^{\rho}
f_\xi(|\ttau|^{-\frac{1}{p-1}} \ell(t,\ttau)) ,
$$
so it follows from assumption ($f3$) and \cite[Lemma~2.1]{BR}
that this term tends to zero in $L^1(0,1)$ as $\ttau\to0$.
Hence, by Fubini's theorem and dominated convergence,
$$\lim_{\ttau \to 0} \| J(\ttau)_{21} - (J_0)_{21}\|_1 = 0 .$$

\noindent \underline{$J(\ttau)_{12}$} \
By assumption ($C$),
$\Psi_\nu'(x)=0 \Rightarrow g(\Psi_\nu(x),0)  \neq 0$.
Also, for each $t \in [0,1]$, the  mapping
$\ttau \to g(t,\ttau)$
is continuous into $C^0[0,1]$ at $\ttau = 0$,
so it follows from
\eqref{psi_pm2.eq}-\eqref{psi_pm_bc2.eq} and \cite[Theorem~2.2]{RYN10}
that the mapping
$\ttau \to |\ell(t,\ttau)'|^{2-p}$
is continuous into $L^1(0,1)$ at $\ttau = 0$.
Hence, by Fubini's theorem and dominated convergence,
$$\lim_{\ttau \to 0} \| J(\ttau)_{12} - (J_0)_{12}\|_1 = 0 .$$

\noindent \underline{$J(\ttau)_{23}$} \
By definition,
\alns{
g_\ttau(\ell(t,\ttau),\ttau)
=&
f(|\ttau|^{-\frac{1}{p-1}}\ell(t,\ttau))
  +
\frac{1}{p-1}|\ttau|^{-\frac{1}{p-1}}\ell(t,\ttau)
  f_\xi(|\ttau|^{-\frac{1}{p-1}}\ell(t,\ttau))
.
}
Now, using $(f1)$-$(f2)$, a similar argument to the first case
shows that
$$
\lim_{\ttau \to 0} \| J(\ttau)_{23}  - f_{\sgn \Psi_\nu} \|_1 = 0 .
$$

Combining these results proves \eqref{J_lim.eq}.
\end{proof}

For $0 < |\ttau| < \tde$, we now define $m(\tau) \in C^0[0,1]$ by
$$
m(\ttau)(x) := \max\{|Q(\ttau)(y) - z_0(y)| : 0 \le y \le x\},
\quad  x \in [0,1] ,
$$
where $z_0$ denotes the unique solution of \eqref{linpb}.
Integrating \eqref{zeq} yields
\alns{
m(\ttau)(x)
&\le
|z_0|_0 \int_0^x |J(\ttau) - J_0|_M  +  \int_0^x|J(\ttau)|_M m(\ttau)
\\&\le
C \|J(\ttau) - J_0\|_1  +  \int_0^x|J(\ttau)|_M m(\ttau) ,
}
so by Gronwall's inequality,
$
|m(\ttau)|_0 \le C \|J(\ttau) - J_0\|_1 ,
$
and hence, by \eqref{J_lim.eq},
$$
\lim_{\ttau \to 0} | Q(\ttau) - z_0 |_0 = 0 .
$$
This proves that the derivative $\eta_\ttau(0)$ exists and equals $z_0$.
Since equations
\eqref{psi_pm_dtau.eq}-\eqref{psi_pm_dtau_bc.eq}
are clearly equivalent to the first two components of \eqref{linpb},
this completes the proof of Proposition~\ref{deriv_psi0.prop}.
\hfill$\Box$


\begin{thebibliography}{99}


\bibitem{ASA}
{\sc H. Asakawa},
Landesman-Lazer type problems for Fucik�s spectrum,
{\em  Nonlinear Anal.}
{\bf 26} (1996), 407--414.

\bibitem{BR}
{\sc P.A. Binding, B.P. Rynne},
Half-eigenvalues of periodic Sturm-Liouville problems,
{\em J. Differential Equations},
{\bf 206} (2004), 280--305.

\bibitem{CL}
{\sc E.A. Coddington, N. Levinson},
{\it Theory of Ordinary Differential Equations},
Tata McGraw-Hill, New Delhi, 1972.

\bibitem{DAN}
{\sc E.N. Dancer},
On the Dirichlet problem for weakly non-linear elliptic partial
differential equations
{\em Proc. Roy. Soc. Edin.}
{\bf 76A} (1977), 283--300.

\bibitem{DRA}
{\sc P. Dr\'abek},
{\em Solvability and bifurcations of nonlinear equations},
Pitman Research Notes in Mathematics Series, 264, Longman, Harlow, 1992.

\bibitem{DONG0}
{\sc Y. Dong},
On the solvability of asymptotically positively homogeneous equations with
Sturm-Liouville boundary value conditions,
{\em Nonlinear Anal.} {\bf 42} (2000),
1351--1363.

\bibitem{DONG}
{\sc Y. Dong},
Landesman-Lazer conditions for Sturm-Liouville BVPs
with generalized Fucik types of resonant points of
asymptotically positively homogeneous equations,
{\em Z. Angew. Math. Phys.} {\bf 53} (2002),
692--703.

\bibitem{DR}
{\sc P. Dr\'abek, S.B. Robinson},
On the Fredholm alternative for the \fuc spectrum,
{\em Abstr. Appl. Anal. }
{\bf 2010}.

\bibitem{FAB}
{\sc C. Fabry},
Landesman-Lazer conditions for periodic boundary value problems
with asymmetric nonlinearities,
{\em J. Differential Equations},
{\bf 116} (1995), 405--418.

\bibitem{HAR}
{\sc P. Hartman},
{\it Ordinary Differential Equations},
John Wiley and Sons, New York, 1964.

\bibitem{LL}
{\sc E.M. Landesman, A.C. Lazer},
Nonlinear perturbations of linear elliptic
boundary value problems at resonance,
{\em J. Math. Mech.} {\bf 19} (1969/1970), 609--623.

\bibitem{PER}
{\sc K. Perera},
Resonance problems with respect to the \fuc spectrum of the
$p$-Laplacian,
{\em Electronic JDE}
{\bf 2002} 36.

\bibitem{RW1}
{\sc W. Reichel, W. Walter},
Radial solutions of equations and inequalities involving the $p$-Laplacian
{\em  J.  of Inequal. Appl.},
{\bf 1} (1997), 47--71.

\bibitem{RW2}
{\sc W. Reichel, W. Walter},
Sturm-Liouville type problems for the $p$-Laplacian under asymptotic
non-resonance conditions,
{\em  J. Differential Equations}
{\bf 156} (1999), 50--70.

\bibitem{RYN1}
{\sc B.P.  Rynne},
The Fucik spectrum of general Sturm-Liouville problems,
{\em J. Differential Equations}
{\bf 161} 87--109 (2000).

\bibitem{RYN2}
{\sc B.P. Rynne},
$p$-Laplacian problems with jumping nonlinearities,
{\em J. Differential Equations.}
{\bf 226} (2006), 501--524.

\bibitem{RYN10}
{\sc B.P. Rynne}, A global curve of stable, positive solutions
for a $p$-Laplacian problem,
{\em Electron. J. Differential Equations} {\bf 2010} 58.

\bibitem{TOM}
{\sc P. Tomiczek},
Potential Landesman-Lazer type conditions and the \fuc spectrum,
{\em Electron. J. Differential Equations}
{\bf 2005} 94.

\bibitem{ZEI}
{\sc E. Zeidler}, \emph{Nonlinear Functional Analysis and its
Applications, Vol. I - Fixed Point Theorems}, Springer-Verlag, New
York, 1986.


\end{thebibliography}
\end{document}